\newtheorem{theorem}{Theorem}[section]
\newtheorem{claim}[theorem]{Claim}
\newtheorem{lemma}[theorem]{Lemma}
\newtheorem{fact}[theorem]{Fact}
\newtheorem{corollary}[theorem]{Corollary}
\theoremstyle{definition}
\newtheorem{example}[theorem]{Example}
\newtheorem{remark}[theorem]{Remark}
\newcommand{\Z}{\mathbb Z}
\def\depth{\mathrm{depth}}
\def\Im{\mathrm{Im}}
\newcommand{\NBD}{}
\numberwithin{equation}{section}
\title{Limit free computation of entropy}
\author{Dikran Dikranjan
\\{\footnotesize {\tt  dikran.dikranjan@uniud.it}} 
\\{\footnotesize Dipartimento di Matematica e Informatica,}
\\{\footnotesize Universit\`{a} di Udine,}
\\{\footnotesize Via delle Scienze, 206 - 33100 Udine, Italy} 
\and Anna Giordano Bruno
\\{\footnotesize {\tt  anna.giordanobruno@uniud.it}} 
\\{\footnotesize Dipartimento di Matematica e Informatica,}
\\{\footnotesize Universit\`{a} di Udine,}
\\{\footnotesize Via delle Scienze, 206 - 33100 Udine, Italy}
}
\date{Dedicated to the sixtieth birthday of Fabio Zanolin}
\begin{document}

\maketitle

\begin{abstract}
Various limit-free formulas are given for the computation of the algebraic and the topological entropy, respectively in the settings of endomorphisms of locally finite discrete groups and of continuous endomorphisms of totally disconnected compact groups. As applications we give new proofs of the connection between the algebraic and the topological entropy in the abelian case and of the connection of the topological entropy with the finite depth for topological automorphisms.
\end{abstract}

\bigskip
\noindent Key words: topological entropy, algebraic entropy, totally disconnected compact group, finite depth\\
2010 AMS Subject Classification: 37B40, 22C05, 54H11, 54H20, 54C70, 20K30.

\section{Introduction}

In this paper we are concerned with the topological and the algebraic entropy respectively in the setting of continuous endomorphisms of totally disconnected compact groups and of endomorphisms of locally finite groups. In the abelian case the correspondence between these two settings - that is between continuous endomorphisms of totally disconnected compact abelian groups and endomorphisms of torsion abelian groups - is given by Pontryagin duality.

\medskip
In \cite{AKM} Adler, Konheim and McAndrew introduced the topological entropy for continuous selfmaps of compact spaces, while later on Bowen in \cite{B} introduced it for uniformly continuous selfmaps of metric spaces, and this definition was extended to uniformly continuous selfmaps of uniform spaces by Hood in \cite{hood}. As explained in details in \cite{DG-islam}, for continuous endomorphisms of totally disconnected compact groups the topological entropy can be introduced as follows. It is worth recalling that a totally disconnected compact group $K$ has as a local base at $1$ the family $\mathcal B(K)$ of all open subgroups of $K$, as proved by van Dantzig in \cite{vD} for all locally compact totally disconnected groups.

Let $K$ be a totally disconnected compact group and $\psi:K\to K$ a continuous endomorphism. For every open subgroup $U$ of $K$ and every positive integer $n$ let $$C_n(\psi,U)=U\cap \psi^{-1}(U)\cap\ldots\cap\psi^{-n+1}(U)$$ be the \emph{$n$-th $\psi$-cotrajectory} of $U$, and the \emph{$\psi$-cotrajectory} of $U$ is
$$C(\psi,U)=\bigcap_{n=1}^\infty\psi^{-n}(U)=\bigcap_{n=1}^\infty C_n(\psi,U).$$
Note that this is the greatest $\psi$-invariant subgroup of $K$ contained in $U$.

The \emph{topological entropy of $\psi$ with respect to $U$} is given by the following limit, which is proved to exist (see also Lemma \ref{logalpha} below),
$$H_{top}(\psi,U)=\lim_{n\to \infty}\frac{\log[K:C_n(\psi,U)]}{n}.$$
The \emph{topological entropy} of $\psi$ is $$h_{top}(\psi)=\sup\{H_{top}(\psi,U):U\in\mathcal B(K)\}.$$

\medskip
Using ideas briefly sketched in \cite{AKM}, Weiss developed in \cite{W} the definition of algebraic entropy for endomorphisms of torsion abelian groups. Moreover Peters modified this definition in \cite{Pet} for automorphisms of abelian groups, and this approach was extended to all endomorphisms of abelian groups in \cite{DG2}; in \cite{DG-islam} also the hypothesis of commutativity of the groups was removed. Following \cite{DG-islam} we give here the definition of algebraic entropy for endomorphisms of locally finite groups, which coincides with the definition given in \cite{AKM} in the abelian case. 

Let $G$ be locally finite group and $\phi:G\to G$ an endomorphism. 
Denote by $\mathcal F(G)$ the family of all finite subgroups of $G$. For every finite subgroup $F$ of $G$ and every positive integer $n$ let
$$
T_n(\phi,F)=F\cdot \phi(F)\cdot\ldots\cdot\phi^{n-1}(F)
$$ 
be the \emph{$n$-th $\phi$-trajectory} of $F$, and the \emph{$\phi$-trajectory} of $F$ is
$$
T(\phi,F)=
\bigcup_{n=1}^{\infty}T_n(\phi,F).
$$
If $G$ is abelian, then $T(\phi,F)$ is the smallest $\phi$-invariant subgroup of $G$ containing $F$.

The \emph{algebraic entropy of $\phi$ with respect to $F$} is the following limit, which exists as proved in \cite{DG-islam},
$$
H_{alg}(\phi,F)={\lim_{n\to\infty}\frac{\log|T_n(\phi,F)|}{n}}.
$$
The \emph{algebraic entropy} of $\phi$ is 
$$
h_{alg}(\phi)=\sup\{H_{alg}(\phi,F):F\in\mathcal F(G)\}.
$$

Every locally finite group is obviously torsion, while the converse holds true under the hypothesis that the group is abelian; in the other hand the solution of Burnside's problem shows that even groups of finite exponent may fail to be locally finite.

\medskip
Yuzvinski claims at the end of his paper \cite{Y},  that for every torsion abelian group $G$ and every endomorphism $\phi:G\to G$ one has
\begin{equation}\label{dag'}
h_{alg}(\phi)=\sup \left\{\log \left |\frac{T(\phi,F)}{\phi(T(\phi,F))} \right| : F\in\mathcal F(G)\right\}.
\end{equation}
This formula is {\em false} without the assumption that $\phi$ is injective, as shown by Example \ref{YN} below (see also \cite{DSV}).
The huge gap in Example \ref{YN} is due to the special choice of the zero endomorphism. In fact, as noted in \cite{DSV}, Yuzvinski's claim is true for {\em injective} endomorphisms. 
A proof of this theorem, based on a much more general result on multiplicities, was given in \cite{DSV}. Here we offer a short multiplicity-free proof of the following more general and precise formula that obviously implies the theorem. Note that if $G$ is a torsion abelian group and $\phi:G\to G$ an endomorphism, then the hypothesis that $\ker\phi\cap T(\phi,F)$ in the following formula is automatically satisfied (see Lemma \ref{kerfinite}).

\bigskip
\noindent\textbf{Algebraic Formula.}
\emph{Let $G$ be a  locally finite group, $\phi:G\to G$ an endomorphism and $F$ a normal finite subgroup of $G$ such that $\ker\phi\cap T(\phi,F)$ is finite. Then 
$$
H_{alg}(\phi,F) = \log \left |\frac{T(\phi,F)}{\phi(T(\phi,F))} \right|-\log|\ker\phi\cap T(\phi,F)|.
$$}
\smallskip

The next corollary shows that Yuzvinski's claim holds true for injective endomorphisms. 

\begin{corollary}\label{Coro0:May21}
Let $G$ be a locally finite group, $\phi:G\to G$ an injective endomorphism and $F$ a finite normal subgroup of $G$.  Then
$$
H_{alg}(\phi,F) = \log \left |\frac{T(\phi,F)}{\phi(T(\phi,F))} \right|.
$$
Therefore \eqref{dag'} holds true whenever $\phi$ is injective. 
\end{corollary}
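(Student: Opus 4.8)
The plan is to read the corollary off the Algebraic Formula, whose hypotheses simplify completely under injectivity. First I would note that $\phi$ injective means $\ker\phi=\{1\}$, whence $\ker\phi\cap T(\phi,F)=\{1\}$ is trivially finite. This is precisely the standing assumption needed to apply the Algebraic Formula to the finite normal subgroup $F$, so no separate finiteness check (via Lemma \ref{kerfinite} or otherwise) is required.

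Invoking the Algebraic Formula then gives
$$H_{alg}(\phi,F)=\log\left|\frac{T(\phi,F)}{\phi(T(\phi,F))}\right|-\log|\ker\phi\cap T(\phi,F)|.$$
Because $|\ker\phi\cap T(\phi,F)|=1$, the subtracted term $\log|\ker\phi\cap T(\phi,F)|$ equals $\log 1=0$, and the first displayed identity of the corollary follows immediately.

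For the claim about \eqref{dag'} I would specialize to its hypotheses, namely a torsion abelian group $G$. As recorded in the excerpt, torsion coincides with local finiteness for abelian groups, so $G$ is locally finite and the argument above applies; moreover every finite subgroup of an abelian group is normal, so the identity $H_{alg}(\phi,F)=\log|T(\phi,F)/\phi(T(\phi,F))|$ holds for every $F\in\mathcal F(G)$ without any normality restriction. Taking the supremum over $F\in\mathcal F(G)$ and recalling $h_{alg}(\phi)=\sup\{H_{alg}(\phi,F):F\in\mathcal F(G)\}$ then yields \eqref{dag'} verbatim.

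Since all the substantive work is already carried out in the Algebraic Formula, I do not anticipate a genuine obstacle; the corollary amounts to evaluating that formula in the degenerate case $\ker\phi=\{1\}$. The only points deserving attention are matching the injective hypothesis to the kernel-intersection assumption of the Algebraic Formula, and observing that in the abelian case the per-subgroup identity passes termwise through the supremum that defines $h_{alg}(\phi)$.
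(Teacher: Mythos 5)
Your proposal is correct and matches the paper's (implicit) argument exactly: the corollary is stated as an immediate consequence of the Algebraic Formula, obtained by observing that injectivity makes $\ker\phi\cap T(\phi,F)$ trivial, so the subtracted term vanishes. Your additional remarks on deducing \eqref{dag'} — torsion abelian implies locally finite, normality is automatic, and the identity passes through the supremum defining $h_{alg}(\phi)$ — are exactly the routine details the paper leaves unstated.
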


This formula suggests a similar approach for the topological entropy. Indeed it is possible to prove the following limit-free formula for the topological entropy. Also in this case, if the totally disconnected compact group $K$ is abelian and $\psi:K\to K$ is a continuous endomorphism, then the condition that ${K}/({\Im\psi+C(\psi,U)})$ is finite is automatically satisfied.

\bigskip
\noindent\textbf{Topological Formula.}
\emph{Let $K$ be a totally disconnected compact group, $\psi:K\to K$ a continuous endomorphism and $U$ an open normal subgroup of $K$ such that ${K}/({\Im\psi\cdot C(\psi,U)})$ is finite. Then $$H_{top}(\psi,U)=\log\left|\frac{\psi^{-1}(C(\psi,U))}{C(\psi,U)}\right|-\log\left|\frac{K}{\Im\psi\cdot C(\psi,U)}\right|.$$}
\smallskip

Stoyanov in \cite{St} proved that in the compact case for the computation of the topological entropy one can reduce to surjective endomorphisms $\psi$, for which the quotient ${K}/({\Im\psi\cdot C(\psi,U)})$ is obviously trivial.
The much simpler formula in this case (practically, the topological counterpart of Corollary \ref{Coro0:May21}) is given in Corollary \ref{Coro1:May21}.

%

\medskip
In Section \ref{alg-sec} we give a proof of the Algebraic Formula, while in Section \ref{top-sec} we verify the Topological Formula.  Moreover we note how these two results give immediately a new proof of Weiss Bridge Theorem connecting the algebraic and the topological entropy by Pontryagin duality. Note that the Pontryagin dual of a torsion abelian group is a totally disconnected compact abelian group.

\bigskip
\noindent\textbf{Weiss Bridge Theorem.}
\emph{Let $G$ be a torsion abelian group and $\phi:G\to G$ an endomorphism. Let $K=\widehat G$ be the Pontryagin dual of $G$ and let $\psi=\widehat\phi:K\to K$ be the dual of $\phi$. Then $$h_{alg}(\phi)=h_{top}(\psi).$$}
\medskip

%


Let $K$ be a totally disconnected compact group and $\psi:K\to K$ a topological automorphism. In \cite{Willis4} Willis defined the pair $(K,\psi)$ to have \emph{finite depth} if there exists $U\in\mathcal B(K)$ such that 
\begin{equation}\label{Eq5:May21}
\bigcap_{n\in\Z}\psi^n(U)=\{1\};
\end{equation}
 we call a subgroup $U$ with this property \emph{$\phi$-antistable}. One can show that $K$ must necessarily be metrizable and  totally disconnected (see Section \ref{depth-sec} for more details). For a pair $(K,\psi)$ of finite depth the \emph{depth} of $\psi$ is 
\begin{equation}\label{Eq4:May21}
\depth(\psi)=[\psi(C(\psi^{-1},U)):C(\psi^{-1},U)];
\end{equation}
 as noted in \cite{Willis4} this index is finite and does not depend on the choice of the $\phi$-antistable subgroup $U\in\mathcal B(K)$.

In Section \ref{depth-sec} an application of the Topological Formula is given in Theorem \ref{h=logdp}, stating that in case $(K,\psi)$ is a pair of finite depth, then
 $$
 h_{top}(\psi)=\log\depth(\psi).
$$ 
A similar result for the measure-theoretic entropy, going into a somewhat different direction, can be found in  \cite[Theorem 2]{Ki}.
\NBD According to Halmos \cite{Halmos}, surjective continuous endomorphisms of compact groups are measure preserving, and in this case the measure theoretic entropy coincides with the topological entropy as proved by Stoyanov \cite{St}.


\section{Algebraic entropy}\label{alg-sec}

The following example shows that Yuzvinski's claim \eqref{dag'} is false without the assumption that the considered endomorphism is injective.

\begin{example}\label{YN}
Let $G$ be a non-zero torsion abelian group, and $\phi:G\to G$ be the zero endomorphism. Take any non-zero finite subgroup $F$ of $G$;
then $T(\phi,F)=F$ and $\phi(T(\phi,F))=0$. 
Then $$\sup \left\{\log \left |\frac{T(\phi,F)}{\phi(T(\phi,F))} \right| : F\in\mathcal F(G)\right\}\geq \log\left|\frac{T(\phi,F)}{\phi(T(\phi,F))}\right|=\log|F|,$$ while $h_{alg}(\phi) = 0$.

In particular, when $G$ is an {\em infinite} torsion abelian group, for every $n>0$ we can pick a finite subgroup $F_n$ of $G$ of size $\geq n$. 
Then $$\sup \left\{\log \left |\frac{T(\phi,F)}{\phi(T(\phi,F))} \right| : F\in\mathcal F(G)\right\}=\infty,$$ so in this case one has $0 = h_{alg} (\phi) \ne \infty$ in \eqref{dag'}.
\end{example}

We are now in position to prove the Algebraic Formula.

\begin{theorem}[Algebraic Formula]\label{AYF2}
Let $G$ be a locally finite group, $\phi:G\to G$ an endomorphism and $F$ a finite normal subgroup of $G$ such that $\ker\phi\cap T(\phi,F)$ is finite.  Then
$$H_{alg}(\phi,F) = \log \left |\frac{T(\phi,F)}{\phi(T(\phi,F))} \right|-\log|\ker\phi\cap T(\phi,F)|.$$
\end{theorem}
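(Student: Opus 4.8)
The plan is to show that the integer indices $c_n := [T_{n+1}(\phi,F):T_n(\phi,F)]$ are eventually constant and that $H_{alg}(\phi,F)$ is the logarithm of this eventual value. Write $T = T(\phi,F)$ and $T_n = T_n(\phi,F)$. First I would record the structural facts I need. Since $F$ is normal, $F\cdot S = S\cdot F$ for every subset $S$ of $G$, so an easy induction based on the identity $T_{n+1} = F\cdot\phi(T_n)$ shows that each $T_n$ is a subgroup (the product of the subgroup $F$ with the subgroup $\phi(T_n)$ is a subgroup precisely because $F$ is normal); each $T_n$ is finite, being a product of $n$ sets of size at most $|F|$. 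As $T_n\subseteq T_{n+1}$, the union $T$ is a subgroup with $\phi(T)\subseteq T$, and passing to the union in $T_{n+1}=F\cdot\phi(T_n)$ gives $T = F\cdot\phi(T)$.

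The core step is an exact count of $c_n = |T_{n+1}|/|T_n|$. Applying the first isomorphism theorem to $\phi|_{T_n}\colon T_n\to\phi(T_n)$ yields $|T_n| = |\phi(T_n)|\cdot|\ker\phi\cap T_n|$, while the cardinality formula $|HK| = |H|\,|K|/|H\cap K|$ applied to $T_{n+1}=F\cdot\phi(T_n)$ gives $|T_{n+1}| = |F|\,|\phi(T_n)|/|F\cap\phi(T_n)|$. Dividing, I obtain
$$c_n = \frac{[F:F\cap\phi(T_n)]}{|\ker\phi\cap T_n|}.$$
Neither identity requires $\phi(T_n)$ to be normal, only that $T_n$ is a finite subgroup; this is what lets the non-abelian bookkeeping go through.

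It then remains to take limits. The subgroups $F\cap\phi(T_n)$ increase to $F\cap\phi(T)$ and $\ker\phi\cap T_n$ increase to $\ker\phi\cap T$; since $F$ is finite and $\ker\phi\cap T$ is finite by hypothesis, both sequences stabilize, so $c_n$ is eventually the constant $c = [F:F\cap\phi(T)]/|\ker\phi\cap T|$. The coset correspondence $f\,(F\cap\phi(T))\mapsto f\,\phi(T)$ identifies $[F:F\cap\phi(T)]$ with the index $[T:\phi(T)] = |T/\phi(T)|$ (using $T=F\cdot\phi(T)$), so $c = |T/\phi(T)|/|\ker\phi\cap T|$. Finally, from $\log|T_n| = \log|F| + \sum_{i=1}^{n-1}\log c_i$ together with the Ces\`aro theorem, $H_{alg}(\phi,F) = \lim_n \tfrac1n\log|T_n| = \log c$, which is exactly the claimed formula. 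I expect the main obstacle to be the second paragraph: extracting the index identity cleanly in the non-abelian case, where $\phi(T_n)$ need not be normal and one must argue purely with cardinalities of subgroups and coset spaces rather than with honest quotient groups. The finiteness hypothesis on $\ker\phi\cap T$ enters exactly here, to guarantee that the denominator stabilizes.
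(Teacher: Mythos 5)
Your proof is correct and follows essentially the same route as the paper's: both arguments rest on the identity $[T_{n+1}:T_n]=[F:F\cap\phi(T_n)]\,/\,|\ker\phi\cap T_n|$ (obtained from $T_{n+1}=F\cdot\phi(T_n)$, the product formula $|HK|=|H||K|/|H\cap K|$ and Lagrange), on the stabilization of the chains $F\cap\phi(T_n)$ and $\ker\phi\cap T_n$, and on the identification $|T/\phi(T)|=[F:F\cap\phi(T)]$. The only differences are organizational: the paper first reduces to the case $\ker\phi\cap T(\phi,F)\subseteq F$ and quotes the stabilization of $[T_{n+1}:T_n]$ from \cite{DGSZ}, whereas you derive that stabilization directly from the two stabilizing chains and finish with a Ces\`aro argument --- a small self-contained bonus, but not a genuinely different proof.
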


\begin{proof} Let $K=\ker\phi\cap T(\phi,F)$, which is finite by hypothesis. We show that one can assume without loss of generality that $F$ contains $K$. Indeed let $F'=F K\subseteq T(\phi,F)$. Then $T(\phi,F')=T(\phi,F)$, and so $H_{alg}(\phi,F')=H_{alg}(\phi,F)$; moreover $K=\ker\phi\cap F'\subseteq F'$. So we assume without loss of generality that $K\subseteq F$ and we verify that 
$$
H_{alg}(\phi,F) = \log \left|\frac{T(\phi,F)}{\phi(T(\phi,F))} \right|-\log|K|.
$$ 

For the sake of brevity, we write in the sequel $T_n$ and $T$, for $T_n(\phi,F)$ and $T(\phi,F)$ respectively. 

Arguing as in \cite[Lemma 1.1]{DGSZ} we have that the index $[T_{n+1}:T_n]$ stabilizes, i.e.,  there exists $n_0>0$ such that for all $n>n_0$ one has $[T_{n+1}:T_n]=\alpha$, consequently $H_{alg}(\phi,F) = \log \alpha$.  Our aim is to show that also 
\begin{equation}\label{(1)}
\left |\frac{T}{\phi(T)} \right|= \alpha\cdot|K|;
\end{equation}
obviously this proves the theorem.  Since $T = F\cdot \phi(T)$ and $(F\cdot \phi (T))/\phi (T)\cong F/(F \cap \phi(T))$, it follows that \eqref{(1)} is equivalent to 
\begin{equation}\label{(2)}
\left|\frac{F}{F \cap \phi (T)}\right|= \alpha\cdot |K|.
\end{equation} 
The increasing chain $F\cap \phi (T_n)$  of finite subgroups of $F$ stabilizes, so there exists $n_1>0$ such that $F\cap \phi (T) = F \cap \phi (T_n)$
for all $n\geq n_1$.  Hence \eqref{(2)} is equivalent to $$\left|\frac{F}{F \cap \phi(T_n)}\right|= \alpha\cdot |K|$$ for all $n\geq n_1$. 

As $ F/(F \cap \phi (T_n)) \cong ( F\cdot \phi (T_n))/\phi (T_n) = T_{n+1}/\phi (T_n)$, we conclude that 
\begin{equation}\label{(3)}
\left|\frac{F}{F \cap \phi (T_n)}\right|= [T_{n+1}: \phi (T_n)].
\end{equation}
Since $\phi(T_n)\cong T_n/(\ker\phi\cap T_n)=T_n/K$, we have 
$|\phi (T_n)|\cdot |K|= |T_n|$. Hence Lagrange Theorem applied to the group $T_{n+1}$ and its subgroups $T_n$ and $\phi (T_n)$ gives
\begin{equation}\label{(4)}
[T_{n+1}: \phi (T_n)] = \frac{|T_{n+1}|}{ |\phi (T_n)|}= \frac{|T_{n+1}|\cdot |K|}{ |T_n|}= [T_{n+1}: T_n] \cdot |K|=\alpha\cdot |K|,
\end{equation} 
provided $n\geq \max \{n_0,n_1\}$. From \eqref{(3)} and \eqref{(4)} we get \eqref{(2)}, and this concludes the proof.
\end{proof}

\section{Topological entropy}\label{top-sec}

The following is the counterpart of \cite[Lemma 1.1]{DGSZ} for the topological entropy. Its proof follows the one of \cite[Lemma 2.2]{DGS}.

\begin{lemma}\label{logalpha}
Let $K$ be a compact group, $\psi:K\to K$ a continuous endomorphism and $U$ an open normal subgroup of $K$. For every positive integer $n$ let $c_n:=[K:C_n(\psi,U)]$. Then 
\begin{itemize}
\item[(a)] $c_{n}$ divides $c_{n+1}$ for every $n>0$. 
\end{itemize}
For every $n>0$ let $\alpha_{n}:=\frac{c_{n+1}}{c_{n}}=[C_{n}(\psi,U):C_{n+1}(\psi,U)]$. Then
\begin{itemize}
\item[(b)] $\alpha_{n+1}$ divides $\alpha_{n}$ for every $n>0$. 
\item[(c)] Consequently the sequence $\{\alpha_n\}_{n>0}$ stabilizes, i.e., there exist integers $n_0>0$ and $\alpha>0 0$ such that $\alpha_n=\alpha$ for every $n\geq n_0$. 
\item[(d)] Moreover $H_{top}(\psi,U)=\log\alpha$.
\item[(e)] If $\psi$ is a topological automorphism, $H_{top}(\psi^{-1},U)= H_{top}(\psi,U)$. 
\end{itemize}
\end{lemma}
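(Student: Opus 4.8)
The plan is to treat (a)--(d) as facts about the decreasing chain of open normal subgroups $C_n:=C_n(\psi,U)=\bigcap_{i=0}^{n-1}\psi^{-i}(U)$, and to handle (e) by a change of variable between $\psi$ and $\psi^{-1}$. First I would record the structural facts used throughout: each $\psi^{-i}(U)$ is open (continuity of $\psi$) and normal (preimage of a normal subgroup), so every $C_n$ is an open normal subgroup of the compact group $K$ and hence of finite index, with $C_{n+1}\subseteq C_n$. The tower law $c_{n+1}=[K:C_{n+1}]=[K:C_n]\,[C_n:C_{n+1}]=c_n\,[C_n:C_{n+1}]$ gives (a) at once and identifies $\alpha_n=[C_n:C_{n+1}]$, as stated. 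Two elementary identities then drive everything: $C_{n+1}=U\cap\psi^{-1}(C_n)$, and consequently $\psi(C_{n+1})\subseteq C_n$ (if $\psi^i(x)\in U$ for $0\le i\le n$, then $\psi^{i}(\psi(x))\in U$ for $0\le i\le n-1$).

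The core is part (b), and this is where I expect the real work to lie. Since $\psi(C_{n+1})\subseteq C_n$ and $\psi(C_{n+2})\subseteq C_{n+1}$, the endomorphism $\psi$ induces a well-defined homomorphism
$$\bar\psi\colon C_{n+1}/C_{n+2}\longrightarrow C_n/C_{n+1},\qquad xC_{n+2}\mapsto\psi(x)C_{n+1}.$$
I would then prove $\bar\psi$ injective: if $x\in C_{n+1}$ and $\psi(x)\in C_{n+1}$, then $\psi^i(x)\in U$ for $0\le i\le n$ together with $\psi^{i}(\psi(x))=\psi^{i+1}(x)\in U$ for $0\le i\le n$ gives $\psi^k(x)\in U$ for $0\le k\le n+1$, i.e. $x\in C_{n+2}$. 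Both quotients being finite, Lagrange's theorem applied to the subgroup $\bar\psi(C_{n+1}/C_{n+2})$ of $C_n/C_{n+1}$ yields that $\alpha_{n+1}=[C_{n+1}:C_{n+2}]$ divides $[C_n:C_{n+1}]=\alpha_n$. The only delicate point is the bookkeeping with the two shifted index ranges in the cotrajectory conditions, together with the need for the quotients to be genuine groups, which is exactly why normality of $U$ (hence of every $C_n$) is required.

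Parts (c) and (d) are routine consequences. From (b) the sequence $\{\alpha_n\}$ is nonincreasing (divisibility forces $\alpha_{n+1}\le\alpha_n$), so being a sequence of positive integers it stabilizes at some $\alpha$ from an index $n_0$ on, which is (c). For (d) I would telescope $c_n=c_1\prod_{i=1}^{n-1}\alpha_i$, so that for $n>n_0$,
$$\frac{\log c_n}{n}=\frac{\log c_1+\sum_{i=1}^{n_0-1}\log\alpha_i+(n-n_0)\log\alpha}{n}\longrightarrow\log\alpha\qquad(n\to\infty),$$
which simultaneously shows that the limit defining $H_{top}(\psi,U)$ exists and equals $\log\alpha$.

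For (e) the plan is to compare the two cotrajectories directly. When $\psi$ is a topological automorphism, $\psi^{n-1}$ is an automorphism of $K$ and hence commutes with finite intersections, so
$$\psi^{n-1}(C_n(\psi,U))=\psi^{n-1}\Big(\bigcap_{i=0}^{n-1}\psi^{-i}(U)\Big)=\bigcap_{i=0}^{n-1}\psi^{\,n-1-i}(U)=\bigcap_{j=0}^{n-1}\psi^{j}(U)=C_n(\psi^{-1},U).$$
Since an automorphism carries cosets bijectively to cosets it preserves index, whence $[K:C_n(\psi^{-1},U)]=[K:C_n(\psi,U)]$ for every $n$; the two sequences $c_n$ coincide, and passing to the limit in the definition gives $H_{top}(\psi^{-1},U)=H_{top}(\psi,U)$.
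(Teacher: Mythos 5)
Your proposal is correct and follows essentially the same route as the paper: the tower law for (a), an embedding of $C_{n+1}/C_{n+2}$ into $C_n/C_{n+1}$ induced by $\psi$ for (b), monotonicity of a divisibility chain of positive integers for (c), telescoping for (d), and the identity $\psi^{n-1}(C_n(\psi,U))=C_n(\psi^{-1},U)$ for (e). The only difference is cosmetic: in (b) you define the induced map $\bar\psi$ directly on the consecutive cotrajectory quotients and compute its kernel elementwise, whereas the paper realizes the same injection through second-isomorphism-theorem identifications modulo $\psi^{-n}(U)$ and $\psi^{-n+1}(U)$; your verification is, if anything, a bit more streamlined.
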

\begin{proof}
Let $n>0$. Since there is no possibility of confusion we denote $C_n(\psi,U)$ simply by $C_n$.

\smallskip
(a) Since $K/C_n$ is isomorphic to $(K/C_{n+1})/(C_n/C_{n+1})$, it follows that $\frac{c_{n+1}}{c_n}=[C_{n}:C_{n+1}]$ and in particular $c_{n}$ divides $c_{n+1}$.

\smallskip
(b) We prove that $C_n/C_{n+1}$ is isomorphic to a subgroup of $C_{n-1}/C_n$, and this gives immediately the thesis. First note that 
$$\frac{C_n}{C_{n+1}}=\frac{C_n}{C_n\cap \psi^{-n}(U)}\cong \frac{C_n\cdot\psi^{-n}(U)}{\psi^{-n}(U)}.$$ Now $(C_n\psi^{-n}(U))/\psi^{-n}(U)$ is a subgroup of the quotient $(\psi^{-1}(C_{n-1})\psi^{-n}(U))/\psi^{-n}(U)$. The homomorphism $K/\psi^{-n}(U)\to K/\psi^{-n+1}(U)$ induced by $\psi$ is injective, therefore 
$(\psi^{-1}(C_{n-1})\cdot\psi^{-n}(U))/\psi^{-n}(U)$ is isomorphic to its image $$\frac{C_{n-1}\cdot\psi^{-n+1}(U)}{\psi^{-n+1}(U)}\cong \frac{C_{n-1}}{C_{n-1}\cap \psi^{-n+1}(U)}=\frac{C_{n-1}}{C_n}.$$

\smallskip
(c) follows immediately from (b).

\smallskip
(d) By item (c) for $n_0>0$ we have $c_{n_0+n}=\alpha^n c_{n_0}$ for every $n\geq0$, and by the definition of topological entropy 
$$H_{top}(\psi,U)=\lim_{n\to\infty}\frac{\log c_n}{n}=\lim_{n\to \infty}\frac{\log(\alpha^n c_{n_0})}{n}=\log\alpha.$$

\smallskip
(e) Assume that $\psi$ is a topological automorphism. For every positive integer $n$ let $c_n^*:=[K:C_n(\psi^{-1},U)]$.
According to (a)--(c) applied to $\psi^{-1}$, $H_{top}(\psi^{-1},U)=\log \alpha^*$, where $\alpha^*$ is the value at which stabilizes the sequence 
$\alpha^*_n:=\frac{c_{n+1}^*}{c_{n}^*}$. Hence it suffices to see that $c_n^* = c_n$ for all $n>0$ and this is obvious
since $\psi^{n-1}(C_n(\psi,U))= C_n(\psi^{-1},U)$. 
\end{proof}

For the  proof of the Topological Formula we need the following folklore fact that we give with a proof for reader's convenience.

\begin{lemma}\label{intersect}
Let $G$ be a topological group and let $T$ be a closed subset of $G$. Then for every descending chain 
$B_1 \supseteq B_2 \supseteq \ldots \supseteq B_n \supseteq  \ldots$ of closed subsets of $G$ the intersection $B = \bigcap_{n=1}^\infty B_n $ is non-empty and 
 $\bigcap_{n=1}^\infty (B_{n} T)= B T$, whenever $B_1$ is countably compact. 
\end{lemma}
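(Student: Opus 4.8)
The plan is to prove the two assertions separately, obtaining the equality $\bigcap_{n=1}^\infty(B_nT)=BT$ by applying the non-emptiness statement twice: once directly, and once to a cleverly chosen auxiliary chain built from the group structure.

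First I would establish that $B$ is non-empty (here the $B_n$ are tacitly assumed non-empty, as in the intended application where they are cotrajectories containing $1$). Each $B_n$ is closed in $G$ and contained in $B_1$, hence closed in the subspace $B_1$; being a descending chain of non-empty sets, the family $\{B_n\}$ has the finite intersection property, since any finite subfamily intersects in its smallest member. By the standard characterization of countable compactness in terms of countable families of closed sets with the finite intersection property, the countable compactness of $B_1$ forces $B=\bigcap_{n=1}^\infty B_n\neq\emptyset$. The inclusion $BT\subseteq\bigcap_{n=1}^\infty(B_nT)$ is then immediate, since $B\subseteq B_n$ gives $BT\subseteq B_nT$ for every $n$.

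For the reverse inclusion $\bigcap_{n=1}^\infty(B_nT)\subseteq BT$, which I expect to be the crux, I would fix $x\in\bigcap_{n=1}^\infty(B_nT)$ and exploit the topological group structure. Because inversion is a homeomorphism of $G$, the set $T^{-1}$ is closed, and hence so is its left translate $xT^{-1}$. For each $n$ we may write $x=b_nt_n$ with $b_n\in B_n$ and $t_n\in T$, so that $b_n\in B_n\cap xT^{-1}$. Thus the sets $A_n:=B_n\cap xT^{-1}$ are non-empty and closed, form a descending chain, and satisfy $A_1\subseteq B_1$ with $B_1$ countably compact; since a closed subset of a countably compact space is countably compact, the first part applies to $\{A_n\}$ and yields a point $b\in\bigcap_{n=1}^\infty A_n$. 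Then $b\in B$ and $b\in xT^{-1}$, so $b=xt^{-1}$ for some $t\in T$, whence $x=bt\in BT$.

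The main obstacle is precisely this reverse inclusion, and the decisive idea is to reduce it to the non-emptiness already proved by intersecting the chain with the closed translate $xT^{-1}$, which pins down the admissible factors $b_n$ in a single descending chain. This is exactly the step where the group structure (the closedness of $T^{-1}$ and of its translate) and the countable compactness of $B_1$ are used together; everything else is routine.
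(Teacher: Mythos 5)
Your proof is correct, and the argument for the reverse inclusion is genuinely different from the one in the paper. The paper also starts from a decomposition $x=b_nt_n$ with $b_n\in B_n$, $t_n\in T$, but it then forms the sets $D_n=\overline{\{b_n,b_{n+1},\dots\}}\subseteq B_n$, extracts a point $b\in\bigcap_n D_n\subseteq B$ by countable compactness, and finally verifies by hand, using a symmetric neighborhood $V=V^{-1}$ of the identity and the fact that $bV$ must contain some $b_m$, that $Vb^{-1}x$ meets $T$ for every such $V$, so that $b^{-1}x\in\overline{T}=T$. You instead observe that each $b_n$ lies in the closed set $xT^{-1}$ and apply the non-emptiness statement to the auxiliary descending chain $A_n=B_n\cap xT^{-1}$ of non-empty closed subsets of the countably compact set $B_1$, obtaining directly a point $b\in B\cap xT^{-1}$ and hence $x\in BT$. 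Your route is shorter and more conceptual: it reduces the second assertion entirely to the first, at the price of invoking two standard facts (closed subsets of countably compact spaces are countably compact, and $xT^{-1}$ is closed because inversion and translations are homeomorphisms). The paper's route is more elementary and self-contained in that it re-derives the needed cluster-point property on the spot and never forms the set $xT^{-1}$, but it is correspondingly longer. Both arguments use the group structure and the closedness of $T$ in essentially equivalent ways, so nothing is lost in generality either way.
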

\begin{proof} 
That $B\ne \emptyset$ is a direct consequence of the countable compactness of $B_1$. 

The inclusion $\bigcap_{n=1}^\infty (B_{n} T)\supseteq B T$ is obvious. To verify the converse inclusion pick an element $x\in \bigcap_{n=1}^\infty (B_{n} T)$. Then there exist elements
$b_n\in B_n$, $t_n \in T$ such that $x = b_nt_n$  for every $n>0$. Let $D_n$ be the closure of the set $\{b_n, b_{n+1}, \ldots \}$ for $n>0$. Then 
\begin{equation}\label{Eq_19Maggio}
D_n \subseteq B_n\  \  \mbox{ for each }\  \ n>0. 
\end{equation}
The countable compactness of $B_1$ yields that $\bigcap_{n=1}^\infty D_n\ne \emptyset$. Fix an element $b$ of this intersection and note that 
$b \in B$ due to (\ref{Eq_19Maggio}). It suffices to prove that $b^{-1}x \in T$. Since $T$ is closed it suffices to check that $b^{-1}x$ belongs to the closure of $T$. 
To this end let $V= V^{-1}$ be a symmetric neighborhood of the neutral element of $G$. Then $bV$ is a neighborhood of $b\in D_1$, so $bV\ni b_m$ for some $m >0$. 
This yields $Vb^{-1}  \ni b_m^{-1}$, and consequently $Vb^{-1}x  \ni b_m^{-1}x = t_m$. Therefore $Vb^{-1} x\cap T \ne \emptyset$, and so $b^{-1}x$ belongs to the closure of $T$. 
\end{proof}

%

\begin{theorem}[Topological Formula]\label{TYF2}
Let $K$ be a totally disconnected compact group, $\psi:K\to K$ a continuous endomorphism and $U$ an open normal subgroup of $K$ such that $K/(\Im\psi\cdot C(\psi,U))$ is finite. Then $$H_{top}(\psi,U)=\log\left|\frac{\psi^{-1}(C(\psi,U))}{C(\psi,U)}\right|-\log\left|\frac{K}{\Im\psi\cdot C(\psi,U)}\right|.$$
\end{theorem}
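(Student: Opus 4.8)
The plan is to mirror the structure of the Algebraic Formula proof (Theorem \ref{AYF2}), exploiting the duality between the trajectory/quotient picture there and the cotrajectory/subgroup picture here. By Lemma \ref{logalpha}(d), we have $H_{top}(\psi,U)=\log\alpha$, where $\alpha=\alpha_n=[C_n:C_{n+1}]$ for all large $n$ (writing $C_n$ for $C_n(\psi,U)$ and $C$ for $C(\psi,U)$). So the entire task reduces to proving the index identity
\begin{equation}\label{topgoal}
\left|\frac{\psi^{-1}(C)}{C}\right|=\alpha\cdot\left|\frac{K}{\Im\psi\cdot C}\right|.
\end{equation}
First I would record the basic relations among the cotrajectories that make the computation go: since $C_{n+1}=U\cap\psi^{-1}(C_n)$, the map induced by $\psi$ on quotients lets one compare $C_{n+1}$ to $\psi^{-1}(C_n)$, much as $T_{n+1}=F\cdot\phi(T_n)$ drove the algebraic argument. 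The key structural fact I want is that $C=C(\psi,U)$ is the decreasing intersection $\bigcap_n C_n$, and that $\psi^{-1}(C)=\bigcap_n\psi^{-1}(C_n)$; the stabilization in Lemma \ref{logalpha} should then allow me to replace the infinite intersection $C$ by a fixed $C_n$ for $n$ large enough.

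Here is where Lemma \ref{intersect} enters, and I expect it to be the main obstacle. The subtlety on the topological side, absent in the algebraic (finite-trajectory) setting, is that $C$ is an infinite intersection, so I cannot freely commute $\psi^{-1}$, products $\Im\psi\cdot(-)$, and the intersection without a compactness argument. Concretely, to pass from the stabilized statement ``$[\psi^{-1}(C_n):C_{n+1}]=\alpha\cdot[K:\Im\psi\cdot C_n]$ for $n\geq n_0$'' to the limit statement \eqref{topgoal}, I must know that $\Im\psi\cdot C=\bigcap_n(\Im\psi\cdot C_n)$ and that $\psi^{-1}(C)=\bigcap_n\psi^{-1}(C_n)$ with the corresponding indices converging. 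This is exactly the form $\bigcap_n(B_nT)=BT$ of Lemma \ref{intersect}, applied with $T=\Im\psi$ (closed, since $K$ is compact and $\psi$ continuous) and $B_n=C_n$ a descending chain of closed subgroups of the countably compact group $K$; the lemma guarantees the product commutes with the intersection. The finiteness hypothesis on $K/(\Im\psi\cdot C)$ is what makes $\log|K/(\Im\psi\cdot C)|$ a well-defined finite quantity and ensures the stabilized indices are genuinely finite.

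With those intersection-exchange facts in hand, the proof reduces to a Lagrange-index bookkeeping computation on the finite quotients, strictly parallel to equations \eqref{(3)} and \eqref{(4)} in Theorem \ref{AYF2}. The steps, in order, are: (i) invoke Lemma \ref{logalpha} to identify $H_{top}(\psi,U)=\log\alpha$ and fix $n_0$ past which $\alpha_n=\alpha$; (ii) establish the exact relationship $[C_n:C_{n+1}]=\alpha$ together with the isomorphism $C_{n+1}/C_n\cong$ (image under $\psi$), giving the analogue of $T_{n+1}/\phi(T_n)$; (iii) use Lemma \ref{intersect} with $T=\Im\psi$ to pass to the limit, obtaining $\psi^{-1}(C)/C$ and $K/(\Im\psi\cdot C)$ as the stable values; (iv) assemble the index equation \eqref{topgoal} via the Lagrange identity, exactly as \eqref{(4)} combined $[T_{n+1}:\phi(T_n)]=\alpha\cdot|K|$ from the two ingredients. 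Taking logarithms then yields the stated formula. The genuinely delicate point throughout is controlling the infinite intersection defining $C$, so I would state and apply Lemma \ref{intersect} carefully and verify that $\Im\psi$ is closed before invoking it.
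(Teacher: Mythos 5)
Your overall strategy is sound and close in spirit to the paper's, but there is one genuine gap, and it sits exactly at the step you wave at in (iii). The finite-level identity $[\psi^{-1}(C_n):C_{n+1}]=\alpha_n\cdot[K:\Im\psi\cdot C_n]$ is correct, and your application of Lemma \ref{intersect} with $B_n=C_n$ and $T=\Im\psi$ (closed, as the continuous image of a compact group) does legitimately give $\bigcap_n(\Im\psi\cdot C_n)=\Im\psi\cdot C$ and hence the stabilization of the cokernel term. But that application does nothing for the other term: you still must show that $[\psi^{-1}(C_n):C_{n+1}]$ stabilizes to $\bigl|\psi^{-1}(C)/C\bigr|$, and the trivial identities $\psi^{-1}(C)=\bigcap_n\psi^{-1}(C_n)$ and $C=\bigcap_n C_{n+1}$ are not enough: indices of intersections of descending chains need not be limits of indices (in $\Z_p$ take $A_n=p^n\Z_p\supseteq B_n=p^{2n}\Z_p$, so $[A_n:B_n]=p^n\to\infty$ while both intersections are trivial). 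Nor can you "replace $C$ by $C_n$ for $n$ large", since Lemma \ref{logalpha} stabilizes the indices $\alpha_n$, not the subgroups $C_n$ themselves, which typically decrease strictly forever.

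The missing ingredient is a \emph{second}, different application of Lemma \ref{intersect}, and it is the one the paper actually makes: using $C=U\cap\psi^{-1}(C)$ one writes $\psi^{-1}(C)/C\cong\psi^{-1}(C)\cdot U/U$, a subgroup of the finite group $K/U$; the descending chain $\{\psi^{-1}(C_n)\cdot U/U\}_n$ stabilizes for cardinality reasons, and Lemma \ref{intersect} with $B_n=\psi^{-1}(C_n)$ and $T=U$ identifies its stable value with $\psi^{-1}(C)\cdot U$ (without the lemma you only get the inclusion $\psi^{-1}(C)\cdot U\subseteq\bigcap_n(\psi^{-1}(C_n)\cdot U)$, hence only an inequality). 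Then $\psi^{-1}(C_n)\cdot U/U\cong\psi^{-1}(C_n)/C_{n+1}$ closes the loop. Incidentally, the paper avoids your $T=\Im\psi$ application altogether: it reduces without loss of generality to $U\subseteq L:=\Im\psi\cdot C$ (replacing $U$ by $U\cap L$, which is open since $L$ has finite index and does not change $C$ or $H_{top}$), after which $\Im\psi\cdot C_n=L$ for all $n$ by a one-line sandwich argument. Your route for the cokernel term is a workable alternative to that reduction, but you must add the $T=U$ application to handle $\psi^{-1}(C)/C$; as written, the proof of the main index identity is incomplete precisely where the compactness of $K$ is indispensable.
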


\begin{proof} Since there is no possibility of confusion we denote $C_n(\psi,U)$ and $C(\psi,U)$ simply by $C_n$ and $C$ respectively.
Let $L=\Im\psi \cdot C$.
We can assume without loss of generality that $U\subseteq L$. Indeed otherwise one can take $U'=U\cap L$. Then $U'$ is open since $L$ is open, being a closed subgroup of $K$ of finite index by hypothesis; moreover $C(\psi,U)=C(\psi,U')$ as $\psi^{-1}(U)=\psi^{-1}(U')$ and so $H_{top}(\psi,U)=H_{top}(\psi,U')$.

Let us note that our assumption $U\subseteq L$ and the inclusion $C \subseteq U$ imply 
\begin{equation}\label{F1:20Maggio}
L= \Im\psi \cdot  C\subseteq \Im\psi \cdot  C_n \subseteq  \Im \psi  \cdot  U \subseteq   \Im \psi  \cdot  C \cdot  U \subseteq L \cdot U = L.
\end{equation}
The homomorphism $K/\psi^{-1}(C_n)\to K/C_n$ induced by $\psi$ is injective and the image of $K/\psi^{-1}(C_n)$ is $\Im\psi \cdot C_n /C_n$.  As $\Im\psi \cdot C_n = L$ by (\ref{F1:20Maggio}), we get 
\begin{equation}\label{F1:21Maggio}
[K:\psi^{-1}(C_n)]=[L:C_n]. 
\end{equation}

By Lemma \ref{logalpha} there exist integers $n_0>0$ and $\alpha> 0$ such that $\alpha_n=\alpha$ for every $n\geq n_0$ and $H_{top}(\psi,U)=\log\alpha$. So it suffices to prove that 
\begin{equation}\label{aim}
\left|\frac{\psi^{-1}(C)}{C}\right|=\alpha\cdot |L|.
\end{equation}
We start noting that 
\begin{equation}\label{eqa}
\frac{\psi^{-1}(C)}{C}=\frac{\psi^{-1}(C)}{\psi^{-1}(C)\cap U}\cong \frac{\psi^{-1}(C)\cdot U}{U}.
\end{equation}
The quotient $K/U$ is finite and $\{(\psi^{-1}(C_n)\cdot U)/U\}_{n>0}$ is a descending chain of subgroups of $K/U$, hence it stabilizes, that is there exists $n_1>0$ such that $$\psi^{-1}(C_n)\cdot U=\psi^{-1}(C_{n_1})\cdot U\ \text{for every}\ n\geq n_1;$$ in other words $\bigcap_{n=1}^\infty (\psi^{-1}(C_{n})\cdot U)=\psi^{-1}(C_{n_1})\cdot U$. Lemma \ref{intersect} gives
 $$
 \bigcap_{n=1}^\infty (\psi^{-1}(C_{n})\cdot U)=\left(\bigcap_{n=1}^\infty \psi^{-1}(C_n)\right)\cdot U,
 $$
and $\bigcap_{n=1}^\infty \psi^{-1}(C_n)=\psi^{-1}(C)$,  therefore 
\begin{equation}\label{eqb}
\psi^{-1}(C)\cdot U=\psi^{-1}(C_n) \cdot U\ \text{for every}\ n\geq n_1.
\end{equation}
Let $n\geq\max\{n_0,n_1\}$. Then \eqref{eqa} and \eqref{eqb} give 
$$
\frac{\psi^{-1}(C)}{C}\cong \frac{\psi^{-1}(C)\cdot U}{U}=\frac{\psi^{-1}(C_n)\cdot U}{U} \cong \frac{\psi^{-1}(C_n)}{\psi^{-1}(C_n)\cap U}=\frac{\psi^{-1}(C_n)}{C_{n+1}}.
$$
Consequently 
\begin{equation}\label{eqc}
\left|\frac{\psi^{-1}(C)}{C}\right|=\left|\frac{\psi^{-1}(C_n)}{C_{n+1}}\right|=\frac{[K:C_{n+1}]}{[K:\psi^{-1}(C_n)]}.
\end{equation}
As $[K:C_n]=[K:L][L:C_n]$, (\ref{F1:21Maggio}) gives 
\begin{equation}\label{eqd}
[K:\psi^{-1}(C_n)]=\frac{[K:C_n]}{[K:L]}.
\end{equation}
So using \eqref{eqd} in \eqref{eqc}, and recalling that $n\geq n_0$, we can conclude that 
\begin{equation}\label{eqe}
\left|\frac{\psi^{-1}(C)}{C}\right|=\frac{[K:C_{n+1}]}{[K:C_n]}[K:L]=[C_n:C_{n+1}][K:L]=\alpha[K:L].
\end{equation}
i.e., the wanted equality announced in \eqref{aim}. \end{proof}

As noted in the Introduction, if $K$ is a totally disconnected compact group, $\mathcal B(K)$ is a local base at $1$. In this case also the subfamily $\mathcal B_\triangleleft(K)$ of $\mathcal B(K)$ of all normal open subgroups of $K$ is a local base at $1$. Indeed we have the following property, where for $U\in\mathcal B(K)$, the \emph{heart} $U_K$ of $U$ in $K$ is the greatest normal subgroup of $K$ contained in $U$.

\begin{claim}\label{heart}
Let $K$ be a compact group. If $U\in\mathcal B(K)$, then $U_K\in\mathcal B_\triangleleft(K)$. 
\end{claim}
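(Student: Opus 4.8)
The plan is to show that the heart $U_K=\bigcap_{g\in K}gUg^{-1}$ is an \emph{open} normal subgroup; normality and being contained in $U$ are immediate from the definition, so the only real content is openness. First I would recall that $U_K$ is by construction the largest normal subgroup of $K$ contained in $U$, and that it can be written as the intersection $\bigcap_{g\in K}gUg^{-1}$ of all conjugates of $U$. Each conjugate $gUg^{-1}$ is itself an open subgroup of $K$ (conjugation by $g$ is a homeomorphism), so $U_K$ is an intersection of open subgroups; the difficulty is that an arbitrary intersection of open subgroups need not be open, so I must exploit compactness to reduce to a \emph{finite} subintersection.

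The key step is a counting/covering argument. Since $U$ is open and $K$ is compact, $U$ has finite index in $K$, so there are only finitely many cosets and in particular only finitely many distinct conjugates $gUg^{-1}$ as $g$ ranges over $K$ (an open subgroup of finite index has only finitely many conjugates, each again of the same finite index). Concretely, I would consider the action of $K$ on the finite coset space $K/U$ by left translation, or equivalently the map $g\mapsto gUg^{-1}$; the stabilizer of $U$ under conjugation is the normalizer $N_K(U)$, which contains $U$ and hence is open of finite index, so the orbit $\{gUg^{-1}:g\in K\}$ is finite. Therefore $U_K$ is a \emph{finite} intersection of open subgroups, and a finite intersection of open subgroups is open. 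This establishes $U_K\in\mathcal B(K)$, and together with normality gives $U_K\in\mathcal B_\triangleleft(K)$.

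The main obstacle is making rigorous the claim that $U$ has only finitely many conjugates; everything else is routine. I would argue this via the finiteness of the index $[K:U]$: the conjugates of $U$ are in bijection with the cosets of the normalizer $N_K(U)$ in $K$, and since $U\subseteq N_K(U)$ we have $[K:N_K(U)]\leq[K:U]<\infty$. Hence the orbit of conjugates is finite, completing the reduction to a finite intersection. An alternative, perhaps cleaner in the topological-group setting, is to note that each conjugate $gUg^{-1}$ is a clopen subgroup (open subgroups of a compact group are automatically closed), so $U_K$ is closed; combined with finite index it would suffice, but the conjugate-counting route directly yields openness and is the most economical.
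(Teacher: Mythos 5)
Your argument is correct: openness of $U$ in the compact group $K$ gives $[K:U]<\infty$, hence $[K:N_K(U)]\leq[K:U]<\infty$, so the core $U_K=\bigcap_{g\in K}gUg^{-1}$ is a finite intersection of open subgroups and therefore open, while normality and maximality among normal subgroups contained in $U$ are immediate. The paper states this claim without any proof (treating it as folklore), so there is no argument to compare against; your conjugate-counting route is the standard one and supplies exactly what the authors leave implicit.
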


Since for any $U,V\in\mathcal B(K)$, if $U\subseteq V$, then $H_{top}(\psi,V)\leq H_{top}(\psi,U)$, by the definition of topological entropy we immediately derive that it suffices to take the supremum when $U$ ranges in a local base at $1$ of $K$:

\begin{claim}\label{basesuff}
Let $K$ be a totally disconnected compact group, $\psi:K\to K$ a continuous endomorphism and $\mathcal B\subseteq \mathcal B(K)$ a local base at $1$. Then $h_{top}(\psi)=\sup\{H_{top}(\psi,U):U\in\mathcal B\}$.
\end{claim}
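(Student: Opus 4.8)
The plan is to establish the two opposite inequalities between the two suprema. One direction is immediate: since $\mathcal B\subseteq\mathcal B(K)$, the supremum $\sup\{H_{top}(\psi,U):U\in\mathcal B\}$ is taken over a subfamily of the family indexing $h_{top}(\psi)$, so it is at most $h_{top}(\psi)$. Thus the whole weight of the argument lies in proving the reverse inequality $h_{top}(\psi)\leq\sup\{H_{top}(\psi,U):U\in\mathcal B\}$.

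For that reverse inequality I would fix an arbitrary $V\in\mathcal B(K)$ and use that $\mathcal B$ is a local base at $1$ to find some $U\in\mathcal B$ with $U\subseteq V$. The monotonicity recorded just before the statement then yields $H_{top}(\psi,V)\leq H_{top}(\psi,U)$. If one wishes to justify that monotonicity from scratch, it follows at once from $C_n(\psi,U)\subseteq C_n(\psi,V)$ for every $n$, whence $[K:C_n(\psi,V)]\leq[K:C_n(\psi,U)]$, and the limits defining $H_{top}(\psi,U)$ and $H_{top}(\psi,V)$ inherit the inequality. Since $U\in\mathcal B$, we obtain $H_{top}(\psi,V)\leq\sup\{H_{top}(\psi,W):W\in\mathcal B\}$. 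Taking the supremum over all $V\in\mathcal B(K)$ on the left-hand side produces exactly $h_{top}(\psi)\leq\sup\{H_{top}(\psi,W):W\in\mathcal B\}$, which is what is needed.

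I expect no genuine obstacle here: the statement is essentially a cofinality argument, already flagged in the text as following ``immediately'' from the definition. The only point demanding a moment's attention is that the monotonicity runs opposite to inclusion — smaller open subgroups give larger topological entropy — so one must invoke the local base to produce a $U\in\mathcal B$ sitting \emph{inside} each given $V\in\mathcal B(K)$, rather than one containing it. Once the directions of inclusion and of the entropy inequality are aligned, the two inequalities combine to give the claimed equality.
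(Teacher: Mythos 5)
Your argument is correct and is exactly the cofinality argument the paper intends: the text proves the claim only by remarking on the monotonicity $U\subseteq V\Rightarrow H_{top}(\psi,V)\leq H_{top}(\psi,U)$, and your write-up simply makes explicit the two inequalities that remark yields. No issues.
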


In particular $h_{top}(\psi)=\sup\{H_{top}(\psi,U):U\in\mathcal B_\triangleleft(K)\}$, so we immediately get Corollary \ref{Coro1:May21} of the Topological Formula for continuous surjective endomorphism (in particular, for topological automorphisms).

Following Willis \cite{Willis4}, when $\psi$ is clear, we denote $C(\psi,U)$ also by the shorter and more suggestive $U_-$, and we leave $U_+$ denote the $\psi^{-1}$-cotrajectory $C(\psi^{-1},U)= \bigcap_{n=0}^\infty \psi^n(U)$. We start using this notation from \eqref{Eq3:May21}, where the first equality follows from 
Theorem \ref{TYF2}
and the second one follows from Lemma \ref{logalpha}(e) and the first equality. 

\begin{corollary}\label{Coro1:May21}
Let $K$ be a totally disconnected compact group and $\psi:K\to K$ a continuous surjective endomorphism. Then
\begin{equation}\label{Eq3:May21}
 H_{top}(\psi,U) = \log [\psi^{-1}(U_-): U_-] 
\mbox{ and }[\psi^{-1}(U_-): U_-] = [\psi(U_+):U_+]
\end{equation}
 for every $U\in\mathcal B_\triangleleft(K)$. In particular
$$
h_{top}(\psi)=\sup\left\{\log [\psi^{-1}(U_-): U_-] 
:U\in\mathcal B_\triangleleft(K)\right\}.
$$
\end{corollary}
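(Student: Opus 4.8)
The plan is to obtain all three assertions by feeding surjectivity into the Topological Formula (Theorem \ref{TYF2}) and exploiting the symmetry of Lemma \ref{logalpha}(e). Fix $U\in\mathcal B_\triangleleft(K)$ and write $U_-=C(\psi,U)$. The key initial observation is that surjectivity of $\psi$ means $\Im\psi=K$, whence $\Im\psi\cdot U_-=K$ and the quotient $K/(\Im\psi\cdot U_-)$ is trivial, hence finite. Thus $U$ meets the hypothesis of Theorem \ref{TYF2}, and in the resulting formula the subtracted term $\log|K/(\Im\psi\cdot U_-)|$ equals $\log 1=0$. This already delivers the first equality $H_{top}(\psi,U)=\log[\psi^{-1}(U_-):U_-]$.

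For the second equality I would pass to $\psi^{-1}$. When $\psi$ is a topological automorphism --- the case of interest for Section \ref{depth-sec} --- compactness makes $\psi^{-1}$ a continuous automorphism, so it is again surjective and the first equality applies to it. Since $C(\psi^{-1},U)=U_+$ and preimage under the map $\psi^{-1}$ coincides with image under $\psi$, the first equality for $\psi^{-1}$ reads $H_{top}(\psi^{-1},U)=\log[\psi(U_+):U_+]$. Now Lemma \ref{logalpha}(e) gives $H_{top}(\psi^{-1},U)=H_{top}(\psi,U)$, and equating the two logarithmic expressions for this common value yields $[\psi^{-1}(U_-):U_-]=[\psi(U_+):U_+]$. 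I expect this to be the only delicate point: it is the sole step that leaves the surjective setting, so one must be sure that $\psi^{-1}$ is a genuine continuous endomorphism (which forces the automorphism hypothesis here) and that its cotrajectory is precisely $U_+$, so that Lemma \ref{logalpha}(e) is legitimately applicable.

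Finally, to obtain the formula for $h_{top}(\psi)$ I would take the supremum of the first equality over all $U$. By Claim \ref{heart} the family $\mathcal B_\triangleleft(K)$ of open normal subgroups is a local base at $1$, so Claim \ref{basesuff} permits restricting the defining supremum of $h_{top}(\psi)$ to $U\in\mathcal B_\triangleleft(K)$; substituting the first equality then gives $h_{top}(\psi)=\sup\{\log[\psi^{-1}(U_-):U_-]:U\in\mathcal B_\triangleleft(K)\}$.
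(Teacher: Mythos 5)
Your proposal is correct and follows exactly the paper's route: the first equality comes from Theorem \ref{TYF2} with the cokernel term vanishing by surjectivity, the second from applying that equality to $\psi^{-1}$ together with Lemma \ref{logalpha}(e), and the supremum formula from Claims \ref{heart} and \ref{basesuff}. Your remark that the second equality genuinely requires $\psi$ to be a topological automorphism (so that $\psi^{-1}$ and hence $U_+=C(\psi^{-1},U)$ make sense) is a fair and accurate reading of the implicit hypothesis in the paper's statement.
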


\begin{remark}
One can easily obtain from this corollary the well-known ``logarithmic law" 
$$
H_{top}(\psi^k,U) =k H_{top}(\psi,U)
$$
for a continuous surjective endomorphism $\psi$ and an integer $k$. Indeed it suffices to note that 
$$
[\psi^{-k}(U_-): U_-]=[\psi^{-1}(U_-): U_-]^k.
$$
\end{remark}

The next corollary is dedicated to the case of an open subgroup $U$ with trivial $\psi$-cotrajectory $U_-= C(\psi,U)$. 

\begin{corollary}\label{Coro2:May21}
Let $K$ be a totally disconnected compact group, $\psi:K\to K$ a continuous endomorphism and $U\in\mathcal B_\triangleleft(K)$ with trivial $\psi$-cotrajectory $U_-$. If $\mathrm{Coker}\,\psi =  K/\Im\psi$ is finite, then 
 $$
 H_{top}(\psi,U) = \log\left|\ker \psi\right| - \log |\mbox{{\rm Coker}  }\psi|. 
 $$
\end{corollary}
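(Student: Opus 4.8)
The plan is to obtain this statement as an immediate specialization of the Topological Formula (Theorem \ref{TYF2}) to the degenerate case in which the $\psi$-cotrajectory of $U$ is trivial. First I would verify that the hypotheses of Theorem \ref{TYF2} are satisfied by this $U$. Since by assumption $U_-=C(\psi,U)=\{1\}$, the product $\Im\psi\cdot C(\psi,U)$ collapses to $\Im\psi$, so that $K/(\Im\psi\cdot C(\psi,U))=K/\Im\psi=\mathrm{Coker}\,\psi$, which is finite by hypothesis. Thus the finiteness condition required in Theorem \ref{TYF2} holds, and the formula applies verbatim to the pair $(\psi,U)$.

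Next I would substitute $C(\psi,U)=\{1\}$ into the two quotients appearing in Theorem \ref{TYF2}. The preimage $\psi^{-1}(C(\psi,U))=\psi^{-1}(\{1\})$ is precisely $\ker\psi$, so the first quotient becomes $\psi^{-1}(C(\psi,U))/C(\psi,U)=\ker\psi$ and contributes the term $\log|\ker\psi|$. The second quotient is $K/(\Im\psi\cdot C(\psi,U))=K/\Im\psi=\mathrm{Coker}\,\psi$ and contributes the term $\log|\mathrm{Coker}\,\psi|$. Assembling these two substitutions gives exactly
$$
H_{top}(\psi,U)=\log|\ker\psi|-\log|\mathrm{Coker}\,\psi|,
$$
which is the desired identity.

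There is essentially no obstacle beyond the bookkeeping of these two substitutions; the corollary is a clean reading of the general formula at $U_-=\{1\}$. The only point worth flagging for well-definedness is that the right-hand side is automatically finite: the proof of Theorem \ref{TYF2} establishes that $|\psi^{-1}(C(\psi,U))/C(\psi,U)|$ equals $\alpha\cdot[K:L]$ with $L=\Im\psi\cdot C(\psi,U)$, where $\alpha$ and $[K:L]$ are both finite (the latter being $|\mathrm{Coker}\,\psi|$ in our situation). Hence $\ker\psi$ is forced to be finite, both logarithms are meaningful, and the equality makes sense as stated.
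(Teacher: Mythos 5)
Your proposal is correct and is exactly the intended derivation: the paper states Corollary \ref{Coro2:May21} without proof as an immediate specialization of Theorem \ref{TYF2} to the case $C(\psi,U)=\{1\}$, where $\psi^{-1}(C(\psi,U))=\ker\psi$ and $\Im\psi\cdot C(\psi,U)=\Im\psi$. Your additional observation that $\ker\psi$ is forced to be finite is a sound bonus, consistent with Remark \ref{coker}.
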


The aim of the next remark is to clarify the significance of the hypothesis $|K/(\Im\psi\cdot U_-)|<\infty$ in Theorem \ref{TYF2}. See also Remark \ref{coker} for an interesting consequence of Corollary \ref{Coro2:May21}.

\begin{remark}\label{Rem1:May21}
Let $\psi:K\to K$ a continuous endomorphism of a totally disconnected compact group $K$ and let $U$ an open normal subgroup of $K$. \NBD
\begin{itemize}
   \item[(a)] Let $K_U = K/U_-$, 
    let $q_U: K\to K_U$ be the canonical homomorphism and let $\psi_U: K_U \to K_U$ be 
the induced endomorphism. Clearly, $U_-$
is $\psi$-invariant (but need not be stabilized by $\psi$) and $q_U(U)$ is $\psi_U$-antistable (actually, $q(U)_-=C(\psi_U,q(U))$ is trivial). Moreover, $K/(\Im\psi\cdot U_-)$ is finite precisely when $\Im \psi_U$ has finite index in $K_U$. More precisely, 
$$
K/(\Im\psi\cdot U_-)\cong K_U/(\Im\psi_U\cdot q(U)_-)=K_U/ \Im\psi_U=\mathrm{Coker}\,\psi_U,
$$ 
as $q(U)_-$ is trivial. So 
$$
[K:(\Im\psi\cdot U_-)]= 
[K_U: \Im\psi_U] = |\mathrm{Coker}\,\psi_U|. 
$$
By Corollary \ref{Coro2:May21} the triviality of $q(U)_-$  gives 
 $$
 H_{top}(\psi_U,q_U(U))=\log |\ker \psi_U| - \log |\mathrm{Coker}\,\psi_U|.
 $$
\item[(b)] Now let $N = U_-\cap U_+$, $K_{(U)} = K/N_U$ and let $p_U: K\to K_{(U)}$ be the canonical homomorphism. 
Clearly, $N_U$ is stabilized by $\psi$, the induced endomorphism $\psi_{(U)}$ of $K_{(U)}$ is injective and $p_U(U)$ is $\psi_{(U)}$-antistable  (one can see as before that $K/(\Im\psi\cdot U_-)$ is finite precisely when $\mathrm{Coker}\,\psi_{(U)} =K_{(U)}/\Im \psi_{(U)}$ is finite, etc.). One can use the pairs $(K_{(U)}, \psi_{(U)})$ of finite depth to present the pair  $(K, \psi)$ as an inverse limit of the pairs $(K_{(U)}, \psi_{(U)})$ with $U\in \mathcal B_\triangleleft(K)$ of finite depth (see \cite[Proposition 5.3]{Willis4}). 
\end{itemize}
\end{remark}

\section{The abelian case}

When the groups are abelian the finiteness conditions in the Algebraic Formula and in the Topological Formula automatically satisfied. Indeed we have the following result, which applies directly for the Algebraic Formula and together with Lemma \ref{T-C} for the Topological Formula.


\begin{lemma}\label{kerfinite}
Let $G$ be a torsion abelian group, $\phi:G\to G$ an endomorphism and $F$ a finite subgroup of $G$.
Then $\ker\phi\cap T(\phi,F)$ is finite.
\end{lemma}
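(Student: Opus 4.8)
The plan is to exploit that, in the abelian setting, $T(\phi,F)=\bigcup_{n\ge 1}T_n(\phi,F)$ is an increasing union of \emph{finite} subgroups (each $T_n(\phi,F)$ being a finite product of homomorphic images of the finite group $F$ inside an abelian group), so that $\ker\phi\cap T(\phi,F)=\bigcup_{n\ge 1}\bigl(\ker\phi\cap T_n(\phi,F)\bigr)$ is again an increasing union of finite subgroups. An increasing union of finite subgroups whose orders are bounded above by a fixed integer must stabilize, and is therefore finite; so it suffices to produce a bound on $|\ker\phi\cap T_n(\phi,F)|$ that does not depend on $n$. I claim the bound is simply $|F|$.

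First I would record the recursion $T_n(\phi,F)=F\cdot\phi\bigl(T_{n-1}(\phi,F)\bigr)$, valid because $G$ is abelian and $\phi$ is a homomorphism (with the convention $T_0=\{1\}$), together with the monotonicity $\phi(T_{n-1}(\phi,F))\subseteq\phi(T_n(\phi,F))$ coming from $T_{n-1}(\phi,F)\subseteq T_n(\phi,F)$. Abbreviating $T_n=T_n(\phi,F)$, the restriction of $\phi$ to the finite group $T_n$ has kernel $\ker\phi\cap T_n$ and image $\phi(T_n)$, so the first isomorphism theorem gives $|\ker\phi\cap T_n|=|T_n|/|\phi(T_n)|$.

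Then comes the key estimate: from $T_n=F\cdot\phi(T_{n-1})$ and the elementary inequality $|AB|\le|A|\,|B|$ for finite subgroups $A,B$, one gets $|T_n|\le|F|\,|\phi(T_{n-1})|\le|F|\,|\phi(T_n)|$, whence $|T_n|/|\phi(T_n)|\le|F|$. Combining with the previous identity yields $|\ker\phi\cap T_n|\le|F|$ for every $n$, and the finiteness of $\ker\phi\cap T(\phi,F)$ (indeed, of order at most $|F|$) follows from the increasing-union observation above.

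The step I expect to require the most care is the order count: $\phi$ does \emph{not} in general map $T_n$ into itself (it sends $T_n$ into $T_{n+1}$), so $\phi(T_n)$ is not a subgroup of $T_n$, and $|T_n|/|\phi(T_n)|$ must be read as a ratio of cardinalities produced by the isomorphism $T_n/(\ker\phi\cap T_n)\cong\phi(T_n)$ rather than as an index inside $T_n$. Once this is kept straight the argument is pure order counting and, notably, uses neither the torsion hypothesis nor any structure theory; alternatively one could phrase it module-theoretically, regarding $T(\phi,F)$ as a finitely generated $\Z[x]$-module with $x$ acting as $\phi$ and $\ker\phi\cap T(\phi,F)$ as its $x$-torsion submodule, but the elementary counting seems shortest.
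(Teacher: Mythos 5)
Your proof is correct, and it takes a genuinely different route from the paper's. The paper argues module-theoretically: after reducing to $G=T(\phi,F)$, it views $T(\phi,F)$ as a finitely generated $\Z[X]$-module (with $X$ acting as $\phi$), invokes Noetherianity of $\Z[X]$ to conclude that the submodule $\ker\phi$ is finitely generated over $\Z[X]$, notes that $X$ annihilates $\ker\phi$ so that it is finitely generated over $\Z$, and finally uses the torsion hypothesis to get finiteness. You replace all of this with elementary order counting: the recursion $T_n=F\cdot\phi(T_{n-1})$, the identity $|\ker\phi\cap T_n|=|T_n|/|\phi(T_n)|$ from the first isomorphism theorem, and the inequality $|AB|\le|A|\,|B|$ give the uniform bound $|\ker\phi\cap T_n|\le|F|$, so the increasing union $\ker\phi\cap T(\phi,F)=\bigcup_{n}(\ker\phi\cap T_n)$ stabilizes. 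Your approach buys a quantitative strengthening ($|\ker\phi\cap T(\phi,F)|\le|F|$, which the module argument does not yield), avoids the Hilbert basis theorem and the structure theory of finitely generated modules, and is in the same counting spirit as the proof of the Algebraic Formula itself (where the identity $|\phi(T_n)|\cdot|K|=|T_n|$ already appears); the paper's approach buys brevity and points toward the module-theoretic viewpoint that you mention as an alternative phrasing. Your caution about reading $|T_n|/|\phi(T_n)|$ as a ratio of cardinalities coming from the isomorphism $T_n/(\ker\phi\cap T_n)\cong\phi(T_n)$, rather than as an index inside $T_n$, is well placed and correctly resolved.
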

\begin{proof}
Since $T(\phi,F)$ is $\phi$-invariant, we can assume without loss of generality that $G=T(\phi,F)$. This is a finitely generated $\Z[X]$-module. Therefore $\ker\phi$ is a finitely generated $\Z[X]$-module as well. Since the action of $\phi$ on $\ker \phi$ sends $\ker\phi$ to $0$, we have that $\ker\phi$ is a finitely generated $\Z$-module. Hence $\ker\phi$ is finite.
\end{proof}

We recall now some definitions and results from Pontryagin duality.
For a topological abelian group $G$ the Pontryagin dual $\widehat G$ of $G$ is the group $\mathrm{Chom}(G,\mathbb T)$ of the continuous characters of $G$  endowed with the compact-open topology \cite{P}. The Pontryagin dual of a discrete Abelian group is always compact, Moreover we recall that a finite abelian group is isomorphic to its dual and the the dual of a torsion abelian group is a totally disconnected compact abelian group.
If $\phi:G\to G$ is an endomorphism, its Pontryagin adjoint $\widehat\phi:\widehat G\to \widehat G$ is defined by $\widehat\phi(\chi)=\chi\circ\phi$ for every $\chi\in\widehat G$.
For a subset $H$ of $G$, the \emph{annihilator} of $H$ in $\widehat G$ is $H^\perp=\{\chi\in\widehat G:\chi(H)=0\}$.

\begin{fact}\label{pontr}
Let $G$ be an abelian group.
\begin{itemize}
\item[(a)]If $\{H_n\}_{n>0}$ are subgroups of $G$, then $(\sum_{n=1}^\infty H_n)^\perp\cong \bigcap_{n=1}^\infty H_n^\perp$. 
\end{itemize}
If $H$ a subgroup of $G$ and $\phi:G\to G$ an endomorphism, then:
\begin{itemize}
\item[(b)] $\widehat H\cong \widehat G/H^\perp$;
\item[(c)] $(\phi^n(H))^\perp=(\widehat\phi)^{-n}(H^\perp)$ for every $n\geq0$;
\item[(d)] $\ker\phi^\perp=\Im\widehat\phi$.
\item[(e)] If $H\subseteq L$ are subgroups of $G$, then $H^\perp/L^\perp\cong\widehat{L/H}$.
\end{itemize}
\end{fact}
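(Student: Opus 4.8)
The common engine behind all five items is the basic character pairing $G\times\widehat G\to\T$ together with the single nonformal fact that $\T$ is divisible, hence an injective $\Z$-module; equivalently, for every subgroup $H\le G$ the restriction homomorphism $\rho_H:\widehat G\to\widehat H$, $\chi\mapsto\chi|_H$, is \emph{surjective}. The plan is to dispatch the purely formal identities (a) and (c) by unwinding the definition of $\perp$ and of $\widehat\phi$, then to obtain (b) from the first isomorphism theorem applied to $\rho_H$, to deduce (e) from (b) by functoriality, and finally to prove (d) by a direct argument that again uses surjectivity of restriction.

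For (a), a character $\chi\in\widehat G$ annihilates $\sum_{n}H_n$ exactly when it annihilates every generator, i.e.\ when $\chi\in H_n^\perp$ for all $n$; thus $(\sum_n H_n)^\perp=\bigcap_n H_n^\perp$ literally (the stated $\cong$ is an equality). For (c) I would first record $\widehat{\phi^n}=(\widehat\phi)^n$, which follows from $\widehat{\alpha\circ\beta}=\widehat\beta\circ\widehat\alpha$ and induction. Then $\chi\in(\phi^n(H))^\perp$ means $\chi\circ\phi^n$ vanishes on $H$, i.e.\ $(\widehat\phi)^n(\chi)\in H^\perp$, i.e.\ $\chi\in(\widehat\phi)^{-n}(H^\perp)$; reading the chain of equivalences backwards gives the equality for every $n\ge0$.

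For (b), the map $\rho_H$ is surjective by injectivity of $\T$ and has kernel exactly $H^\perp$ by the definition of the annihilator, so the first isomorphism theorem yields $\widehat G/H^\perp\cong\widehat H$. For (e), note $L^\perp\subseteq H^\perp$ since $H\subseteq L$. The inclusion $H\hookrightarrow L$ induces by restriction a surjection $\widehat L\to\widehat H$ whose kernel is the set of characters of $L$ trivial on $H$, canonically $\widehat{L/H}$. Identifying $\widehat L\cong\widehat G/L^\perp$ and $\widehat H\cong\widehat G/H^\perp$ via (b), this restriction map becomes the canonical projection $\widehat G/L^\perp\to\widehat G/H^\perp$, whose kernel is $H^\perp/L^\perp$ by the third isomorphism theorem; equating the two descriptions of the kernel gives $H^\perp/L^\perp\cong\widehat{L/H}$.

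Part (d) is where the bookkeeping is least automatic, and I expect it to be the main obstacle. The inclusion $\Im\widehat\phi\subseteq(\ker\phi)^\perp$ is immediate: for any $\chi$, the character $\widehat\phi(\chi)=\chi\circ\phi$ kills $\ker\phi$. For the reverse inclusion take $\eta\in(\ker\phi)^\perp$. Since $\eta$ vanishes on $\ker\phi$ it factors as $\eta=\eta'\circ\pi$ through $\pi:G\to G/\ker\phi$, and the induced isomorphism $\bar\phi:G/\ker\phi\to\Im\phi$ transports $\eta'$ to a character $\zeta=\eta'\circ\bar\phi^{-1}$ on $\Im\phi$. Now invoke injectivity of $\T$ to extend $\zeta$ to some $\chi\in\widehat G$; a direct check gives $\widehat\phi(\chi)=\chi\circ\phi=\eta$, so $\eta\in\Im\widehat\phi$. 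Thus the only genuinely nontrivial input throughout is the divisibility of $\T$, used to extend characters; once that is in hand, each item reduces to a standard isomorphism-theorem computation.
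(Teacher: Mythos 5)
Your proof is correct. Note, however, that the paper itself offers no proof of this Fact: it is stated as a collection of standard results from Pontryagin duality, with the reader referred to the literature (Pontryagin's book \cite{P}), so there is no argument in the paper to compare yours against. Your write-up is a sound, self-contained verification, and you correctly isolate the single nonformal ingredient, namely the divisibility (injectivity as a $\Z$-module) of $\T$, which gives surjectivity of the restriction maps $\widehat G\to\widehat H$; everything else is indeed bookkeeping with the isomorphism theorems, and your observation that (a) and (c) are literal equalities rather than mere isomorphisms is accurate. The one point you gloss over is that $\widehat G$, $\widehat H$, $\widehat{L/H}$ are \emph{topological} groups and the isomorphisms in (b) and (e) should in principle be topological; this is harmless here, since $G$ is discrete, so $\widehat G$ is compact and a continuous algebraic isomorphism onto a Hausdorff quotient is automatically a homeomorphism --- and in any case the paper only ever uses these facts to compute cardinalities and indices (as in Lemma \ref{T-C}), for which the algebraic isomorphisms suffice.
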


\begin{lemma}\label{T-C}
Let $G$ be a torsion abelian group, $\phi:G\to G$ an endomorphism and $F$ a finite subgroup of $G$. Let $K=\widehat G$, $\psi=\widehat\phi$ and $U=F^\perp$. Then $U\in\mathcal B(K)$ and
\begin{itemize}
\item[(a)] $T_n(\phi,F)^\perp=C_n(\psi,U)$ for every $n>0$, and $T(\phi,F)^\perp=C(\psi,U)$;
\item[(b)] $\ker\phi\cap T(\phi,F)\cong{K}/({\Im\psi+C(\psi,U)})$;
\item[(c)] ${T(\phi,F)}/{\phi(T(\phi,F))}\cong {\psi^{-1}(C(\psi,U))}/{C(\psi,U)}$.
\end{itemize}
\end{lemma}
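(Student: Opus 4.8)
The plan is to pass everything through Pontryagin duality, turning trajectories of $\phi$ into cotrajectories of $\psi=\widehat\phi$ by means of annihilators, and to use finiteness to identify a finite abelian group with its dual whenever the statement asserts an isomorphism rather than a duality.

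First, $U=F^\perp$ lies in $\mathcal B(K)$: by Fact \ref{pontr}(b) we have $K/U=\widehat G/F^\perp\cong\widehat F$, which is finite because $F$ is; hence the closed subgroup $U$ has finite index in the compact group $K$ and is therefore open. For part (a), I would write the trajectories additively as $T_n(\phi,F)=\sum_{i=0}^{n-1}\phi^i(F)$ and $T(\phi,F)=\sum_{i=0}^{\infty}\phi^i(F)$. Fact \ref{pontr}(a) turns the annihilator of such a sum into the intersection of the annihilators, while Fact \ref{pontr}(c) gives $(\phi^i(F))^\perp=\psi^{-i}(F^\perp)=\psi^{-i}(U)$. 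Combining these, $T_n(\phi,F)^\perp=\bigcap_{i=0}^{n-1}\psi^{-i}(U)=C_n(\psi,U)$ and, in the limit, $T(\phi,F)^\perp=\bigcap_{i\ge 0}\psi^{-i}(U)=C(\psi,U)$. This part is a direct computation once the dictionary is in place.

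For part (c), note that $T:=T(\phi,F)$ is $\phi$-invariant, so $\phi(T)\subseteq T$, and Fact \ref{pontr}(e) applied to this inclusion gives $(\phi(T))^\perp/T^\perp\cong\widehat{T/\phi(T)}$. By Fact \ref{pontr}(c) with $n=1$ together with part (a), $(\phi(T))^\perp=\psi^{-1}(T^\perp)=\psi^{-1}(C)$ and $T^\perp=C$, so $\psi^{-1}(C)/C\cong\widehat{T/\phi(T)}$. Finally, from $T=F+\phi(T)$ one gets $T/\phi(T)\cong F/(F\cap\phi(T))$, a quotient of the finite group $F$; thus $T/\phi(T)$ is finite and isomorphic to its own dual, which upgrades the duality to the desired isomorphism $T/\phi(T)\cong\psi^{-1}(C)/C$.

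For part (b), the object to understand is the annihilator of the intersection $\ker\phi\cap T$. By Lemma \ref{kerfinite} this group is finite, hence isomorphic to its dual, and Fact \ref{pontr}(b) identifies that dual with $K/(\ker\phi\cap T)^\perp$. Using Fact \ref{pontr}(d), $(\ker\phi)^\perp=\Im\psi$, and part (a), $T^\perp=C$, the dual form of Fact \ref{pontr}(a) (annihilator of an intersection is the closure of the sum of the annihilators, obtained by biduality) yields $(\ker\phi\cap T)^\perp=\overline{\Im\psi+C}$. I expect the main obstacle to be justifying that the closure is superfluous: since $\Im\psi$ is the continuous image of the compact group $K$ and $C$ is a closed, hence compact, subgroup, the sum $\Im\psi+C$ is the image of the compact set $\Im\psi\times C$ under multiplication and is therefore itself compact and closed. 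Hence $(\ker\phi\cap T)^\perp=\Im\psi+C$, and we conclude $\ker\phi\cap T\cong K/(\Im\psi+C)$.
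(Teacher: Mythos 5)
Your proposal is correct and follows the same route as the paper, whose proof of Lemma \ref{T-C} consists of the single sentence that the conclusions follow from Fact \ref{pontr}; you have simply supplied the details that the paper leaves implicit. In particular you correctly identify and resolve the one genuinely non-trivial point, namely that in part (b) the dual form of Fact \ref{pontr}(a) a priori yields $(\ker\phi\cap T)^\perp=\overline{\Im\psi+C}$, and that the closure is superfluous because $\Im\psi+C$ is the continuous image of the compact set $\Im\psi\times C$ and hence already closed.
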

\begin{proof}
The conclusions follow from Fact \ref{pontr}.
\end{proof}

Applying this lemma, the Algebraic Formula and the Topological Formula we can now give a short proof of Weiss Bridge Theorem connecting the algebraic and the topological entropy.

\begin{corollary}[Weiss Bridge Theorem]
Let $G$ be a torsion abelian group and $\phi:G\to G$ an endomorphism, let $K=\widehat G$ and $\psi=\widehat \phi$. Then $$h_{alg}(\phi)=h_{top}(\psi).$$
\end{corollary}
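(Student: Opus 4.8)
The plan is to reduce the equality of the two global entropies to a pointwise identity indexed by a single subgroup, and then to transport that identity through annihilator duality. Concretely, I would fix a finite subgroup $F$ of $G$, set $U=F^\perp\in\mathcal B(K)$ as in Lemma \ref{T-C}, and prove
$$H_{alg}(\phi,F)=H_{top}(\psi,U);$$
the theorem will then follow by taking suprema, once I check that $F\mapsto F^\perp$ matches the two indexing families.

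First I would verify that both entropy formulas apply. Since $G$ is abelian, $F$ is automatically normal, and $\ker\phi\cap T(\phi,F)$ is finite by Lemma \ref{kerfinite}; hence the Algebraic Formula (Theorem \ref{AYF2}) yields
$$H_{alg}(\phi,F)=\log\left|\frac{T(\phi,F)}{\phi(T(\phi,F))}\right|-\log|\ker\phi\cap T(\phi,F)|.$$
On the topological side, $U=F^\perp$ is open (as $F$ is finite) and normal (as $K$ is abelian); moreover, $K$ being abelian we have $\Im\psi\cdot C(\psi,U)=\Im\psi+C(\psi,U)$, and Lemma \ref{T-C}(b) identifies the quotient $K/(\Im\psi+C(\psi,U))$ with the finite group $\ker\phi\cap T(\phi,F)$, so the hypothesis of the Topological Formula (Theorem \ref{TYF2}) holds and gives
$$H_{top}(\psi,U)=\log\left|\frac{\psi^{-1}(C(\psi,U))}{C(\psi,U)}\right|-\log\left|\frac{K}{\Im\psi\cdot C(\psi,U)}\right|.$$

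Next I would match the two right-hand sides using Lemma \ref{T-C}: part (c) equates the first logarithmic terms, while part (b) together with the abelian identity $\Im\psi\cdot C(\psi,U)=\Im\psi+C(\psi,U)$ equates the second, giving $H_{alg}(\phi,F)=H_{top}(\psi,F^\perp)$ for every $F\in\mathcal F(G)$. To pass to suprema I would observe that $F\mapsto F^\perp$ carries $\mathcal F(G)$ bijectively onto $\mathcal B(K)$: for finite $F$ the annihilator $F^\perp$ is open, and conversely every open $U\in\mathcal B(K)$ has finite index in the compact group $K$, so by Fact \ref{pontr} the group $U^\perp\cong\widehat{K/U}$ is finite and $U=(U^\perp)^\perp$ with $U^\perp\in\mathcal F(G)$. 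Therefore
$$h_{alg}(\phi)=\sup_{F\in\mathcal F(G)}H_{alg}(\phi,F)=\sup_{F\in\mathcal F(G)}H_{top}(\psi,F^\perp)=\sup_{U\in\mathcal B(K)}H_{top}(\psi,U)=h_{top}(\psi),$$
the last step being the definition of $h_{top}$ (or Claim \ref{basesuff} if one prefers to argue with a local base).

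As for the main difficulty: once Lemma \ref{T-C} is available there is little genuine obstacle, since that lemma does the heavy lifting of transporting the algebraic data to the topological side. The two points demanding care are the bookkeeping passage from the additive sum $\Im\psi+C(\psi,U)$ of Lemma \ref{T-C}(b) to the multiplicative product $\Im\psi\cdot C(\psi,U)$ of Theorem \ref{TYF2} (legitimate precisely because $K$ is abelian), and the verification that the annihilator map sweeps out all of $\mathcal B(K)$, which rests on the reflexivity $\widehat{\widehat G}=G$ and on open subgroups of a compact group having finite index.
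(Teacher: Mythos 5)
Your proposal is correct and follows essentially the same route as the paper's own (much terser) proof: both reduce the statement to the pointwise identity $H_{alg}(\phi,F)=H_{top}(\psi,F^\perp)$ obtained by combining Theorem \ref{AYF2}, Theorem \ref{TYF2} and Lemma \ref{T-C}, and then take suprema over the annihilator correspondence between $\mathcal F(G)$ and $\mathcal B(K)$. Your write-up merely makes explicit the finiteness checks and the bijectivity of $F\mapsto F^\perp$ that the paper leaves implicit.
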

\begin{proof}
Let $K=\widehat G$ and $\psi=\widehat\phi$. Let $U$ be an open subgroup of $K$. Then $F$ is a finite subgroup of $G$. By Theorem \ref{AYF2} and Theorem \ref{TYF2} and by Lemma \ref{T-C} we can conclude that $H_{alg}(\phi,F)=H_{top}(\psi,U)$, hence $h_{alg}(\phi)=h_{top}(\psi)$.
\end{proof}

\begin{remark}
Applying Pontryagin duality in the abelian case one can also derive the Topological Formula from the Algebraic Formula.
Indeed, let $K$ be a totally disconnected compact abelian group and $\psi:K\to K$ a continuous endomorphism. Let $G=\widehat K$, $\phi=\widehat \psi$ and $F=U^\perp$. Then $F$ is a finite subgroup of $G$. By Lemma \ref{T-C} we have that $K/C_n(\psi,U)\cong T_n(\phi,F)$ and so $$H_{top}(\psi,U)=H_{alg}(\phi,F).$$ By Theorem \ref{AYF2} $H_{alg}(\phi,F) = \log |{T(\phi,F)}/{\phi(T(\phi,F))} |-\log|\ker\phi\cap T(\phi,F)|$ and again Lemma \ref{T-C} gives $$|T(\phi,F)/\phi(T(\phi,F))|=|\psi^{-1}(C(\psi,U))/C(\psi,U)|\ \text{and}\ |\ker\phi\cap T(\phi,F)|=|K/(\Im\psi + C(\psi,U))|.$$ Therefore $H_{top}(\psi,U)=\log|\psi^{-1}(C(\psi,U))/C(\psi,U)|-\log|K/(\Im\psi+ C(\psi,U))|$, that is the Topological Formula.
\end{remark}

\section{An application: finite depth and topological entropy}\label{depth-sec}


Let $K$ be a totally disconnected compact group and $\psi:K\to K$ a topological automorphism. As recalled in the Introduction, the pair $(K,\psi)$ has \emph{finite depth} if there exists an $\phi$-antistable $U\in\mathcal B(K)$ (see \eqref{Eq4:May21}).
By Claim \ref{heart} we can assume without loss of generality that $U$ is also normal, that is $U\in\mathcal B_\triangleleft(K)$.
This definition implies that 
\begin{equation}\label{baseU}
\text{the family $\mathcal B_U=\{U_n:n>0\}$, where $U_n:=C_n(\psi,U)\cap C_n(\psi^{-1},U)$, is a local base at $1$.}
\end{equation}
In particular $K$ turns out to be necessarily metrizable and  totally disconnected. Morevoer
$K$ is isomorphic to a subgroup $G_1$ of $F^\Z$, where $F$ is a finite group; if $\sigma$ denotes the left Bernoulli shift of
$F^\Z$, then $G_1$ is stabilized by $\sigma$ and under the identification of $G$ with $G_1$ one has $\psi = \sigma\restriction _{G_1}$ (see also \cite[Proposition 2]{Ki}).

\begin{claim}\emph{\cite[Proposition 5.5]{Willis4}}
Let $(K,\psi)$ be a pair of finite depth. If $U,W\in\mathcal B_\triangleleft(K)$ are $\phi$-antistable, then $[\psi (U_+) : U_+] = [\psi (W_+) : W_+]$.
\end{claim}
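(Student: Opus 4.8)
The plan is to prove the stronger statement that for every $\phi$-antistable $U\in\mathcal B_\triangleleft(K)$ one has $H_{top}(\psi,U)=h_{top}(\psi)$. Since the right-hand side does not depend on $U$, and since Corollary \ref{Coro1:May21} gives $H_{top}(\psi,U)=\log[\psi(U_+):U_+]$, this at once yields $\log[\psi(U_+):U_+]=h_{top}(\psi)=\log[\psi(W_+):W_+]$ for any two $\phi$-antistable $U,W\in\mathcal B_\triangleleft(K)$, hence $[\psi(U_+):U_+]=[\psi(W_+):W_+]$. All the indices involved are finite positive integers, since by Corollary \ref{Coro1:May21} and Lemma \ref{logalpha}(d) each equals the stabilized value $\alpha$ of the sequence $\alpha_n=[C_n:C_{n+1}]$.

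The tool for computing $H_{top}(\psi,U)$ is the local base $\mathcal B_U=\{U_n:n>0\}$ from \eqref{baseU}, where $U_n=C_n(\psi,U)\cap C_n(\psi^{-1},U)$. Because $\psi$ is a topological automorphism and $U\in\mathcal B_\triangleleft(K)$, each $\psi^{\pm k}(U)$ is open and normal, so $U_n\in\mathcal B_\triangleleft(K)$ and Corollary \ref{Coro1:May21} applies to every $U_n$. Writing $U_n=\bigcap_{j=-(n-1)}^{n-1}\psi^j(U)$, the key step is to identify the $\psi^{-1}$-cotrajectory of $U_n$. Computing $\psi^m(U_n)=\bigcap_{i=m-(n-1)}^{m+(n-1)}\psi^i(U)$ and intersecting over $m\geq0$, one finds $(U_n)_+=\bigcap_{i=-(n-1)}^{\infty}\psi^i(U)$, and therefore $\psi^{n-1}\bigl((U_n)_+\bigr)=\bigcap_{i=0}^\infty\psi^i(U)=U_+$; since $\psi$ commutes with $\psi^{n-1}$, also $\psi^{n-1}\bigl(\psi((U_n)_+)\bigr)=\psi(U_+)$.

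Applying the automorphism $\psi^{n-1}$, which preserves subgroup indices, the two identities above give $[\psi((U_n)_+):(U_n)_+]=[\psi(U_+):U_+]$ for every $n>0$. Corollary \ref{Coro1:May21} then yields $H_{top}(\psi,U_n)=\log[\psi((U_n)_+):(U_n)_+]=\log[\psi(U_+):U_+]=H_{top}(\psi,U)$ for all $n$. Finally, since $\mathcal B_U$ is a local base at $1$ contained in $\mathcal B(K)$, Claim \ref{basesuff} gives $h_{top}(\psi)=\sup_{n}H_{top}(\psi,U_n)=\log[\psi(U_+):U_+]=H_{top}(\psi,U)$, which is exactly the stronger statement and hence the claim.

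The main obstacle I anticipate is the bookkeeping in the cotrajectory computation: one must verify that the union of the shifted index-intervals $[m-(n-1),\,m+(n-1)]$ over $m\geq0$ is precisely the half-line $\{i\in\Z:i\geq-(n-1)\}$, so that the symmetric truncation $U_n$ nevertheless has a one-sided $\psi^{-1}$-cotrajectory $(U_n)_+$ that is an exact $\psi^{n-1}$-translate of $U_+$. Once this is in place, everything reduces to a direct application of Corollary \ref{Coro1:May21} and Claim \ref{basesuff} together with the elementary invariance of indices under an automorphism.
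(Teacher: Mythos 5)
Your proof is correct, but it necessarily takes a different route from the paper, because the paper offers no proof of this Claim at all: it is quoted from Willis \cite[Proposition 5.5]{Willis4} and used as an external input. Your argument instead derives the independence of $[\psi(U_+):U_+]$ from the paper's own machinery, and the details check out: the identification $(U_n)_+=\bigcap_{i=-(n-1)}^{\infty}\psi^i(U)$ and the translation $\psi^{n-1}\bigl((U_n)_+\bigr)=U_+$ are right (the union of the intervals $[m-(n-1),m+(n-1)]$ over $m\geq 0$ is exactly the half-line $i\geq -(n-1)$); each $U_n$ lies in $\mathcal B_\triangleleft(K)$ because $\psi$ is a topological automorphism; and Corollary \ref{Coro1:May21} together with \eqref{baseU} and Claim \ref{basesuff} then forces $H_{top}(\psi,U_n)=\log[\psi(U_+):U_+]$ for every $n$, whence $h_{top}(\psi)=\log[\psi(U_+):U_+]$. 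Since these indices are finite positive integers (each equals the stabilized value $\alpha$ of Lemma \ref{logalpha}), the Claim follows for any two antistable $U,W\in\mathcal B_\triangleleft(K)$. Importantly, nothing you use depends on the Claim itself --- Corollary \ref{Coro1:May21} rests only on Theorem \ref{TYF2} and Lemma \ref{logalpha}(e) --- so there is no circularity. What your approach buys is twofold: it makes the paper self-contained on this point (at least for normal antistable subgroups, which is all the paper needs in view of Claim \ref{heart}), and it simultaneously yields Theorem \ref{h=logdp}, whose proof in the paper invokes the Claim precisely at the step where $[\psi^{-1}((U_n)_-):(U_n)_-]$ is identified with $\depth(\psi)$; you replace that invocation by the explicit $\psi^{n-1}$-translation.
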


In view of this claim one defines the \emph{depth} of a pair $(K,\psi)$ of finite depth as 
$$
\depth(\psi)=
[\psi (U_+) : U_+]
$$
 for any $\phi$-antistable $U\in\mathcal B_\triangleleft(K)$. Moreover, since 
\begin{equation}\label{U+->U-}
[\psi (U_+) : U_+]=[\psi^{-1}(U_-):U_-]
\end{equation}
according to (\ref{Eq3:May21}), one can extend this definition to 
$$
\mathrm{depth}(\psi) = [\psi (U_+) : U_+]=[\psi^{-1}(U_-):U_-],
$$
where $U\in\mathcal B_\triangleleft(K)$ is any  $\phi$-antistable $U\in\mathcal B_\triangleleft(K)$. 

\begin{theorem}\label{h=logdp}
Let $(K,\psi)$ be a pair of finite depth. Then $$h_{top}(\psi)=\log\depth(\psi).$$
\end{theorem}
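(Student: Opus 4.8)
The plan is to exploit Corollary \ref{Coro1:May21}, which for the surjective map $\psi$ (an automorphism is in particular surjective) computes $H_{top}(\psi,V)=\log[\psi^{-1}(V_-):V_-]$ for every $V\in\mathcal B_\triangleleft(K)$, together with the convenient local base $\mathcal B_U$ furnished by finite depth. First I would fix a $\psi$-antistable $U\in\mathcal B_\triangleleft(K)$, which exists by the definition of finite depth and Claim \ref{heart}. By \eqref{baseU} the family $\mathcal B_U=\{U_n:n>0\}$, with $U_n=C_n(\psi,U)\cap C_n(\psi^{-1},U)$, is a local base at $1$, so Claim \ref{basesuff} reduces the computation of $h_{top}(\psi)$ to the supremum $\sup\{H_{top}(\psi,U_n):n>0\}$.

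The key observation I would use is that \emph{every} member of $\mathcal B_U$ is again $\psi$-antistable. Indeed, each $U_n$ is an open normal subgroup, being a finite intersection of the open normal subgroups $\psi^m(U)$ (here I use that $\psi$ is a topological automorphism and $U\in\mathcal B_\triangleleft(K)$); and since $U_n\subseteq U$ one gets $\bigcap_{k\in\Z}\psi^k(U_n)\subseteq\bigcap_{k\in\Z}\psi^k(U)=\{1\}$. Thus $U_n\in\mathcal B_\triangleleft(K)$ is $\psi$-antistable for every $n>0$.

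Finally I would combine the two ingredients. Applying \eqref{Eq3:May21} of Corollary \ref{Coro1:May21} to each $U_n$ gives $H_{top}(\psi,U_n)=\log[\psi^{-1}((U_n)_-):(U_n)_-]$, where $(U_n)_-=C(\psi,U_n)$. Since $U_n$ is $\psi$-antistable and normal, the index $[\psi^{-1}((U_n)_-):(U_n)_-]$ equals $\depth(\psi)$ by the definition of depth through \eqref{U+->U-} together with the independence of the chosen antistable subgroup guaranteed by Willis's result \cite[Proposition 5.5]{Willis4}. Hence $H_{top}(\psi,U_n)=\log\depth(\psi)$ for every $n>0$, and taking the supremum of these constant values over the local base yields $h_{top}(\psi)=\log\depth(\psi)$.

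I expect no serious obstacle, since all the substantial work is already packaged in the earlier results. The only point demanding care is the verification that the whole local base $\mathcal B_U$ consists of $\psi$-antistable subgroups, for it is precisely this uniformity that lets the depth formula apply to every $U_n$ and produce the same value $\log\depth(\psi)$ throughout the supremum. Everything else is routine bookkeeping with the quoted corollary and the well-definedness of $\depth(\psi)$.
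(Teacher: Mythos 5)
Your proposal is correct and follows essentially the same route as the paper: fix an antistable $U\in\mathcal B_\triangleleft(K)$, use the local base $\mathcal B_U$ from \eqref{baseU} together with Claim \ref{basesuff}, compute $H_{top}(\psi,U_n)$ via the surjective case of the Topological Formula, and identify the resulting index with $\depth(\psi)$ via \eqref{U+->U-}. The only difference is that you spell out explicitly that each $U_n$ is again a normal $\psi$-antistable open subgroup, a point the paper leaves implicit.
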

\begin{proof} 
Let $U\in\mathcal B_\triangleleft(K)$ be $\phi$-antistable. By \eqref{baseU} the family $\mathcal B_U$ is a local base at $1$. Moreover for any $n>0$ we have $H_{top}(\psi,U_n)=\log[\psi^{-1}(C(\psi,U_n)):C(\psi,U_n)]$ by Theorem \ref{TYF2}, therefore \eqref{U+->U-} gives $$H_{top}(\psi,U_n)=\log\depth(\psi).$$ Hence $h_{top}(\psi)=\log\depth(\psi)$ by Claim \ref{basesuff}.
\end{proof}

The equality $h_{top}(\psi) = h_{top}(\psi^{-1})$ from Lemma  \ref{logalpha}(e)  is well known for the topological entropy of automorphisms of compact groups, we obtain as a by-product the following fact. 

\begin{corollary}\label{Coro4:May21}
Let $(K,\psi)$ be a pair of finite depth. Then $\depth(\psi)=\depth(\psi^{-1}).$
\end{corollary}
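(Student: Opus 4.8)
The plan is to derive the corollary directly from the characterization of the depth already established in the text, using the fundamental symmetry $h_{top}(\psi)=h_{top}(\psi^{-1})$. The key observation is that Theorem~\ref{h=logdp} expresses the depth as a topological invariant, namely $\log\depth(\psi)=h_{top}(\psi)$, and the topological entropy is insensitive to inverting a topological automorphism.

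First I would observe that if $(K,\psi)$ is a pair of finite depth, then so is $(K,\psi^{-1})$: the defining condition \eqref{Eq5:May21}, $\bigcap_{n\in\Z}\psi^n(U)=\{1\}$, is manifestly symmetric in $\psi$ and $\psi^{-1}$ since the intersection runs over all $n\in\Z$. Thus any $\phi$-antistable $U\in\mathcal B_\triangleleft(K)$ witnessing finite depth for $\psi$ simultaneously witnesses it for $\psi^{-1}$, so $\depth(\psi^{-1})$ is well defined. Next I would apply Theorem~\ref{h=logdp} to both automorphisms, obtaining
$$
\log\depth(\psi)=h_{top}(\psi)\quad\text{and}\quad\log\depth(\psi^{-1})=h_{top}(\psi^{-1}).
$$
Finally, invoking Lemma~\ref{logalpha}(e), which gives $H_{top}(\psi^{-1},U)=H_{top}(\psi,U)$ for every open normal $U$ and hence $h_{top}(\psi)=h_{top}(\psi^{-1})$ after taking suprema over a local base (Claim~\ref{basesuff}), I conclude $\log\depth(\psi)=\log\depth(\psi^{-1})$, and therefore $\depth(\psi)=\depth(\psi^{-1})$.

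There is essentially no obstacle here, as the corollary is designed to fall out immediately from the preceding theorem together with the already-proven entropy symmetry; the statement explicitly advertises itself as a ``by-product.'' The only point requiring a word of care is confirming that the finite-depth hypothesis is genuinely symmetric under inversion so that $\depth(\psi^{-1})$ makes sense, but this is transparent from the two-sided intersection in \eqref{Eq5:May21}. Alternatively, one could give a purely algebraic proof by noting that \eqref{U+->U-} already equates $[\psi(U_+):U_+]$ with $[\psi^{-1}(U_-):U_-]$ for a $\phi$-antistable $U$, and observing that for $\psi^{-1}$ the roles of $U_+$ and $U_-$ are exchanged, so the two index expressions coincide; however, routing through the entropy identity is the cleaner argument in the spirit of this section.
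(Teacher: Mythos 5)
Your proof is correct and follows exactly the route the paper intends: the corollary is stated as a by-product of Theorem~\ref{h=logdp} combined with the equality $h_{top}(\psi)=h_{top}(\psi^{-1})$ from Lemma~\ref{logalpha}(e). Your additional remark that the finite-depth condition \eqref{Eq5:May21} is symmetric under inversion, so that $\depth(\psi^{-1})$ is well defined, is a worthwhile point of care that the paper leaves implicit.
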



Theorem \ref{h=logdp} and Corollary \ref{Coro2:May21} have the following consequence. According to \cite[Proposition 5.5]{Willis4}, if $K$ is infinite, then $\mathrm{depth}(\psi)>1$.

\begin{remark}\label{coker}
Let $K$ be a totally disconnected compact group, $\psi:K\to K$ a continuous endomorphism and $U\in\mathcal B_\triangleleft(K)$ with trivial $\psi$-cotrajectory $C(\psi,U)$. The triviality of $C(\psi,U)$ implies that $U$ is $\psi$-antistable. This yields that the pair $(K,\psi)$ has finite depth, so if $K$ is infinite, we have $H_{top}(\psi,U) =\log \depth(\psi) > 0$ by Theorem \ref{h=logdp}. In particular Corollary \ref{Coro2:May21} gives the \NBD non-obvious inequality $\log\left|\ker \psi\right| - \log |\mathrm{Coker}\,\psi| > 0$, i.e., $\psi $ is necessarily non-injective and $|\ker \psi | >|\mathrm{Coker}\,\psi|$. 
\end{remark}

\section*{Acknowledgements}

It is a pleasure to thank George Willis for sending us his preprint \cite{Willis4} and for inspiring us to prove Theorem \ref{h=logdp}.
We thank also the members of our Seminar on Dynamical Systems at the University of Udine for the useful discussions on this topic.

\end{document}